\documentclass[12pt,reqno]{elsarticle}

\usepackage{amsfonts,color,amsmath,amssymb,fancyhdr}
\usepackage{amsthm}
\usepackage{tikz}
\usepackage{graphicx}

\usepackage[colorlinks,linkcolor=blue, anchorcolor=red,citecolor=green]{hyperref}
\usepackage{subfigure}

\textheight 23cm
\textwidth 16cm
\hoffset-1truecm
\voffset -2truecm

\def\Box{\vcenter{\vbox{\hrule\hbox{\vrule
     \vbox to 8.8pt{\hbox to 10pt{}\vfill}\vrule}\hrule}}}

\newcommand{\F}{{\mathbb F}}

\newcommand{\PGL}{\textup{PGL}}

\newcommand{\GL}{\textup{GL}}

\newcommand{\diag}{\textup{diag}}

\newcommand{\tr}{\textup{Tr}}

\newtheorem{thm}{Theorem}

\newtheorem{lemma}[thm]{Lemma}
\newtheorem{corollary}[thm]{Corollary}

\numberwithin{equation}{section}
\numberwithin{thm}{section}

\theoremstyle{definition}

\begin{document}
\newcommand{\stopthm}{\begin{flushright}
		\(\box \;\;\;\;\;\;\;\;\;\; \)
\end{flushright}}
\newcommand{\symfont}{\fam \mathfam}

\title{On codes in the projective linear group $\PGL(2,q)$}

\date{}
\author[add1]{Tao Feng}\ead{tfeng@zju.edu.cn}
\author[add1]{Weicong Li}\ead{conglw@zju.edu.cn}
\author[add1]{Jingkun Zhou\corref{cor1}}\ead{jingkunz@zju.edu.cn}\cortext[cor1]{Corresponding author}
\address[add1]{School of Mathematical Sciences, Zhejiang University, 38 Zheda Road, Hangzhou 310027, Zhejiang P.R China}

\begin{abstract}
  In this paper, we resolve a conjecture of Green and Liebeck [Disc. Math., 343 (8):117119, 2019] on codes in $\PGL(2,q)$. To be specific, we show that: if $D$ is a dihedral subgroup of order $2(q+1)$ in $G=\PGL(2,q)$, and  $A=\{g\in G: g^{q+1}= 1,\, g^2\ne 1 \}$, then $\lambda G=A\cdot D$, where $\lambda=q$ or $q-1$ according as $q$ is even or odd.
	\newline
	
	\noindent\text{Keywords:} codes, Cayley graphs, projective linear groups, linearized polynomials.
	
	\noindent\text{Mathematics Subject Classification (2010)}: 05C25 94B60 11T06
\end{abstract}	

\maketitle

\section{Introduction}

Let $G$ be a finite group, $A$ be a nonempty proper subset of $G$ and $\lambda$ be a natural number. Following \cite{Tedera2004}, we say that $A$ \textit{divides} $\lambda G$ if there is a subset $B$ of $G$ such that the multiset $\{ab:\,a\in A,\, b \in B\}$ covers each element of $G$ exactly $\lambda$ times; the subset $B$ is called a \textit{code} with respect to $A$ and we write $A\cdot B=\lambda G$. If the subset $A$ of $G$ does not contain the identity and satisfies that $A=\{g^{-1}:\,g\in A\}$, we define the \textit{Cayley graph} $\textup{Cay}(G,A)$ as the graph with vertex set $G$ and edge set $\{(g,h):\,g^{-1}h\in A\}$. In such a case, $A\cdot B=\lambda G$ implies that the code $B$ is a collection of vertices of  $\textup{Cay}(G,A)$ such that each vertex has exactly $\lambda$ neighbors in $B$. If $\lambda=1$, then $B$ is called a \textit{perfect code} of $\textup{Cay}(G,A)$. This notion of perfect codes generalizes the classical perfect codes in coding theory.
The recent paper \cite{Huang2018} gives a very nice exposition of their relationship and the history of research development in this direction. We refer the reader to
\cite{Biggs1973,Lint1975,Jan1986} for the classical theory of perfect codes over graphs and \cite{Huang2018,Green2019,Ma2019,Chen2020} for some recent progress.

When $A$ is the union of conjugacy classes, it is of  particular interest to study codes in $\textup{Cay}(G,A)$. In \cite{Et87, Tedera2004}, the authors use representation theory to study such codes.  However, there are very few known examples in the literature, and this motivated the authors of \cite{Green2019}  to study the case where $A$ is a union of conjugacy classes and $B$ is a subgroup. They construct several families of such codes in symmetric groups and special linear groups $\textup{SL}(2,q)$ and pose two conjectured families. This paper resolves their Conjecture 3.2, which we now state as a theorem.

\begin{thm}\label{conj_main}
Let $G=\PGL(2,q)$ and $D$ be a dihedral subgroup of order $2(q+1)$. Set
\[
A=\{g\in G: g^{q+1}= 1,\, g^2\ne 1 \}.
\]
Then $\lambda  G= A\cdot D$, where $\lambda= q$ or $q-1$ according as $q$ is even or odd.
\end{thm}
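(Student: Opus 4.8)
The plan is to translate the combinatorial identity into an equation in the complex group algebra $\mathbb{C}[G]$ and then test it against the irreducible characters of $G=\PGL(2,q)$. Writing $\widehat{A}=\sum_{a\in A}a$, $\widehat{D}=\sum_{b\in D}b$ and $\widehat{G}=\sum_{g\in G}g$, the statement $A\cdot D=\lambda G$ is equivalent to $\widehat{A}\,\widehat{D}=\lambda\widehat{G}$. Since $A$ is defined purely by the order conditions $g^{q+1}=1$ and $g^2\neq1$, it is a union of conjugacy classes (and is inverse-closed), so $\widehat{A}$ lies in the centre of $\mathbb{C}[G]$. Applying an irreducible representation $\rho$ with character $\chi$, one has $\rho(\widehat{A})=\omega_\chi I$ with $\omega_\chi=\chi(1)^{-1}\sum_{a\in A}\chi(a)$, so $\rho(\widehat{A}\,\widehat{D})=\omega_\chi\,\rho(\widehat{D})$. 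For the trivial character this recovers the numerical identity $|A|\,|D|=\lambda|G|$, which I would check directly after computing $|A|$ (each element of $A$ is a regular element of a unique cyclic nonsplit torus $T\cong C_{q+1}$ of order $q+1$, and there are $q(q-1)/2$ such tori). For every nontrivial $\chi$ the right-hand side $\rho(\widehat{G})$ vanishes, and since $\rho(\widehat{D})=|D|\cdot(\text{projection onto }V^{D})$ vanishes exactly when $\sum_{b\in D}\chi(b)=0$, the whole theorem reduces to the dichotomy: for each nontrivial irreducible $\chi$, either $\sum_{a\in A}\chi(a)=0$ or $\sum_{b\in D}\chi(b)=0$.

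Next I would pin down the two ingredients needed to evaluate these sums. On the $A$ side, writing $x_t$ for the elliptic element with parameter $t\in T$ and grouping the classes of $A$ as $\{t,t^{-1}\}$ with $t^2\neq1$, one gets $\sum_{a\in A}\chi(a)=\tfrac{q(q-1)}{2}\sum_{t^2\neq1}\chi(x_t)$. On the $D$ side, $D=N_G(T)$ is the disjoint union of $T$ and its $q+1$ reflections, so $\sum_{b\in D}\chi(b)=\sum_{t\in T}\chi(t)+\sum_{r}\chi(r)$. I would then recall the character table of $\PGL(2,q)$, organised by family: the linear characters (trivial, and the sign character $\xi$ when $q$ is odd), the degree-$q$ Steinberg characters ($\mathrm{St}$, and $\mathrm{St}\cdot\xi$ when $q$ is odd), the principal series of degree $q+1$ parametrised by characters $\alpha$ of the split torus, and the cuspidal characters of degree $q-1$ parametrised by characters $\theta$ of $C_{q+1}$ with $\theta^2\neq1$. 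The two structural facts I would lean on are that principal series characters vanish on all regular elliptic classes, whereas cuspidal characters vanish on all regular split classes, with elliptic value $\sigma_\theta(x_t)=-(\theta(t)+\theta(t)^{-1})$.

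With these in hand the case analysis is short. For the principal series, vanishing on elliptic classes forces $\sum_{a\in A}\chi(a)=0$ immediately. For the Steinberg character the elliptic values are constant ($=-1$), so $\sum_A$ is nonzero, but a telescoping computation gives both $\sum_{t\in T}\mathrm{St}(t)=0$ and $\sum_{r}\mathrm{St}(r)=0$, hence $\sum_D\mathrm{St}=0$. For the remaining linear and twisted-Steinberg characters when $q$ is odd, I expect the dichotomy to split according to $q\bmod 4$, with exactly one of the two sums collapsing in each case through the quadratic character of $C_{q+1}$. The delicate family is the cuspidal characters, and this is where I expect the main obstacle. Here, for $q$ odd, $\sum_{a\in A}\sigma_\theta(a)=q(q-1)\bigl(1+\theta(t_0)\bigr)$, where $t_0$ is the involution of $T$, so $\sum_A$ vanishes precisely when $\theta(t_0)=-1$; when $\theta(t_0)=+1$ one must instead show $\sum_{b\in D}\sigma_\theta(b)=0$.

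The crux is therefore the evaluation of $\sum_{b\in D}\sigma_\theta(b)$, and the subtle point is that the $q+1$ reflections of $D$ are \emph{not} all $G$-conjugate. Realising $T$ as a regular $(q+1)$-cycle on the projective line, these reflections behave like the reflections of a regular $(q+1)$-gon. When $q$ is odd they split into $\tfrac{q+1}{2}$ reflections fixing two points of $\PG(1,q)$ (split involutions, on which $\sigma_\theta$ vanishes) and $\tfrac{q+1}{2}$ fixing no point (nonsplit involutions conjugate to $t_0$, where $\sigma_\theta=-2\theta(t_0)$); this yields $\sum_{t\in T}\sigma_\theta(t)=q+1$ and $\sum_{r}\sigma_\theta(r)=-(q+1)\theta(t_0)$, hence $\sum_{b\in D}\sigma_\theta(b)=(q+1)\bigl(1-\theta(t_0)\bigr)$, which vanishes exactly when $\theta(t_0)=+1$, complementing the $A$-side computation. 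When $q$ is even, $q+1$ is odd and all $q+1$ reflections fix a single point, hence are transvections with $\sigma_\theta=-1$; then $\sum_{t\in T}\sigma_\theta(t)=q+1$ and $\sum_{r}\sigma_\theta(r)=-(q+1)$ give $\sum_{b\in D}\sigma_\theta(b)=0$ outright, while $\sum_{a\in A}\sigma_\theta(a)=q(q-1)\neq0$. Either way the cuspidal family satisfies the dichotomy, and the part I would be most careful about is precisely this conjugacy classification of the reflections together with the cuspidal values on the two involution classes, since the required cancellation hinges on them exactly.
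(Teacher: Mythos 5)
Your proposal is correct, but it takes a genuinely different route from the paper. The paper argues entirely by explicit computation: it models $\GL(2,q)$ as the maps $x\mapsto ax+bx^q$ on $\F_{q^2}$, lifts $A$ and $D$ to $\tilde A=\bigcup_t C_t$ and $\tilde D=\{f_{t,0},f_{0,t}\}$, and counts, for each $f_{a,b}$, the solutions $(s,t,r)$ of an explicit quadratic over $\F_{q^2}$, with a delicate case analysis on discriminants (Lemmas 3.1--3.8). You instead reduce $\widehat A\,\widehat D=\lambda\widehat G$ to the dichotomy that every nontrivial irreducible $\chi$ kills $\widehat A$ (via the central-character eigenvalue) or kills $\widehat D$ (via $\langle\chi|_D,1\rangle=0$), which is the standard representation-theoretic framework of Terada and Green--Liebeck; the reduction is valid since the irreducible representations separate elements of $\mathbb{C}[G]$. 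I checked your key inputs and they hold: principal series vanish on regular elliptic classes; $\mathrm{St}(g)=|\mathrm{Fix}_{\PG(1,q)}(g)|-1$ gives $\sum_D\mathrm{St}=0$; for $q$ odd the $q+1$ reflections of $D=N_G(T)$ do split into $(q+1)/2$ split and $(q+1)/2$ nonsplit involutions (in the paper's model $f_{0,t}$ fixes a point of $\PG(1,q)$ iff $t\F_q^*$ is a square in $C_{q+1}$), while for $q$ even they are all transvections; and the cuspidal computation $\sum_A\sigma_\theta=q(q-1)(1+\theta(t_0))$ versus $\sum_D\sigma_\theta=(q+1)(1-\theta(t_0))$ gives exactly the complementary vanishing you need, using $\theta(t_0)=\pm1$. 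The one family you leave as an expectation ($\xi$ and $\mathrm{St}\otimes\xi$ for $q$ odd) does work out as you predict: $\sum_A$ vanishes when $q\equiv1\pmod 4$ and $\sum_D$ vanishes when $q\equiv3\pmod4$, using that the split involution lies outside $\PSL(2,q)$ exactly when $q\equiv3\pmod4$. What your approach buys is conceptual transparency and brevity at the cost of importing the character table of $\PGL(2,q)$ and the conjugacy classification of involutions; the paper's approach is self-contained and elementary but computationally heavier. To turn your sketch into a complete proof you would need to write out the $q\bmod 4$ case and cite or verify the character values used.
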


This paper is organized as follows. In Section 2, we describe the model of $\GL(2,q)$ that we use and present some preliminary results about the the preimages of $A$ and $D$ in $\GL(2,q)$, where $A,\,D$ are the subsets in Theorem \ref{conj_main}. We shall prove the theorem by working inside $\GL(2,q)$. In Section 3, we present the proof of Theorem \ref{conj_main}, which is divided into a series of technical lemmas.

\section{Preliminaries}

Let $q$ be a prime power, and $\F_{q^2}$ be a finite field with $q^2$ elements. We regard $\F_{q^2}$ as a vector space $V$ of dimension $2$ over $\F_q$.  Let  $f:\,\F_{q^2}\rightarrow\F_{q^2}$ be an $\F_q$-linear transformation. There exists a unique polynomial $F(X)=aX+bX^q\in\F_{q^2}[X]$ such that $f(x)=F(x)$ for all $x\in\F_{q^2}$, cf. \cite[Chapter 3]{Finitefield}. Such a polynomial $F(X)$ is a \textit{reduced $q$-linearized polynomial}, i.e., $\deg(F)\le q^2-1$ and the transformation $x\mapsto F(x)$ over $\F_{q^2}$ is $\F_q$-linear.
\begin{lemma}\label{lem_inv_ab}
For $a,b\in\F_{q^2}$, the $\F_q$-linear transformation $x\mapsto ax+bx^q$ over $\F_{q^2}$ is invertible if and only if $a^{q+1}\ne b^{q+1}$.
\end{lemma}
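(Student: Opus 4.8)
The plan is to exploit the fact that $f\colon x\mapsto ax+bx^q$ is an $\F_q$-linear endomorphism of the two-dimensional $\F_q$-space $\F_{q^2}$, so it is invertible if and only if it is injective, i.e.\ if and only if its kernel is trivial. Thus I would reduce the whole statement to counting the solutions of $ax+bx^q=0$ and show that a nonzero solution exists exactly when $a^{q+1}=b^{q+1}$.

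First I would clear the two degenerate cases. If $b=0$, then $f$ is multiplication by $a$, which is invertible iff $a\ne 0$, matching $a^{q+1}\ne 0=b^{q+1}$; if $a=0$ and $b\ne 0$, then $f$ is the composition of the Frobenius automorphism $x\mapsto x^q$ with multiplication by $b$, hence invertible, matching $0=a^{q+1}\ne b^{q+1}$. This lets me assume from now on that $a$ and $b$ are both nonzero.

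For the main case, I would divide $ax+bx^q=0$ by $x$ for $x\ne 0$, obtaining $a+bx^{q-1}=0$, i.e.\ $x^{q-1}=-a/b$. The key structural input is that the map $x\mapsto x^{q-1}$ sends $\F_{q^2}^{\ast}$ onto the unique subgroup of order $q+1$, namely the norm-one elements $\{z:z^{q+1}=1\}$: indeed $(x^{q-1})^{q+1}=x^{q^2-1}=1$, and on the cyclic group $\F_{q^2}^{\ast}$ of order $q^2-1$ this homomorphism has kernel $\F_q^{\ast}$ of order $q-1$, so its image has order $(q^2-1)/(q-1)=q+1$. Hence a nonzero $x$ in the kernel of $f$ exists if and only if $-a/b$ lies in this subgroup, that is, $(-a/b)^{q+1}=1$. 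Since $q+1$ is even when $q$ is odd, while $-1=1$ when $q$ is even, one has $(-1)^{q+1}=1$ in every characteristic, so the condition simplifies to $(a/b)^{q+1}=1$, equivalently $a^{q+1}=b^{q+1}$. Therefore the kernel is nontrivial precisely when $a^{q+1}=b^{q+1}$, which yields the claimed criterion upon negation.

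I do not expect a genuine obstacle here; the only points requiring care are the bookkeeping of the degenerate cases and the verification that $(-1)^{q+1}=1$ regardless of the parity of $q$. As an alternative (and a useful sanity check), I would compute the determinant of $f$ after extending scalars along $\F_{q^2}/\F_q$: using $\F_{q^2}\otimes_{\F_q}\F_{q^2}\cong\F_{q^2}\times\F_{q^2}$, multiplication by $a$ becomes $\diag(a,a^q)$ and the Frobenius becomes the coordinate swap, so $f$ is represented by $\left(\begin{smallmatrix} a & b \\ b^q & a^q\end{smallmatrix}\right)$, whose determinant is $a^{q+1}-b^{q+1}$; the invertibility criterion is then immediate, since determinants are preserved under base change.
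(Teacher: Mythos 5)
Your proposal is correct and follows essentially the same route as the paper: reduce invertibility to the kernel of the linear map being trivial, divide $ax+bx^q=0$ by $x\ne 0$ to get $x^{q-1}=-a/b$, and observe that this has a solution exactly when $(-a/b)^{q+1}=1$, i.e.\ $a^{q+1}=b^{q+1}$. You merely spell out the image of the $(q-1)$-st power map and the parity check $(-1)^{q+1}=1$ where the paper cites Lidl--Niederreiter and leaves these details implicit; the determinant computation is a nice independent confirmation but not needed.
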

\begin{proof}
The case where one of $a,b$ is zero is trivial, so we assume that $ab\ne 0$.
By \cite[Lemma 7.1]{Finitefield}, the linear transformation $x\mapsto ax+bx^q$ over $\F_{q^2}$ is invertible if and only if $ax+bx^q=0$ has no nonzero root in $\F_{q^2}$. For $x\in\F_{q^2}^*$, $ax+bx^q=0$ if and only if $x^{q-1}=-ab^{-1}$. The latter equation has a solution if and only if $(-ab^{-1})^{q+1}=1$, i.e., $a^{q+1}=b^{q+1}$. The claim now follows.
\end{proof}

By Lemma \ref{lem_inv_ab}, the set of invertible  $\F_q$-linear transformations of $V=\F_{q^2}$, i.e., $\GL(V)$, consists of the transformations of the form
\begin{equation}\label{eqn_fab}
f_{a,b}:\,\F_{q^2}\rightarrow\F_{q^2},\quad x\mapsto ax+bx^q,
\end{equation}
where $a,b\in\F_{q^2}$ such that $a^{q+1}\ne b^{q+1}$.
\begin{lemma}\label{lem_inver}
Suppose that $a,b\in\F_{q^2}$ such that $a^{q+1}\ne b^{q+1}$, and let $f_{a,b}$ be as in \eqref{eqn_fab}. The inverse transformation $f^{-1}$ is $f_{a',b'}$, where
\[
a'=\frac{a^q}{a^{q+1}-b^{q+1}},\quad b'=\frac{-b}{a^{q+1}-b^{q+1}}.
\]
\end{lemma}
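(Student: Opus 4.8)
The plan is to verify directly that $f_{a',b'}$ is a left inverse of $f_{a,b}$ and then invoke finiteness; since the candidate inverse is already handed to us explicitly, I would not attempt to derive the formula from scratch, but simply check that the composition $f_{a',b'}\circ f_{a,b}$ is the identity map on $\F_{q^2}$.

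First I would record the composition rule for two maps of the form \eqref{eqn_fab}. Writing $f_{a,b}(x)=ax+bx^q$, the key simplification is that every $x\in\F_{q^2}$ satisfies $x^{q^2}=x$, so that $(ax+bx^q)^q=a^qx^q+b^qx^{q^2}=a^qx^q+b^qx$. Consequently, for any $a',b'\in\F_{q^2}$,
\[
f_{a',b'}\bigl(f_{a,b}(x)\bigr)=a'(ax+bx^q)+b'(ax+bx^q)^q=(a'a+b'b^q)\,x+(a'b+b'a^q)\,x^q.
\]
Next I would substitute the claimed values $a'=a^q/\Delta$ and $b'=-b/\Delta$, where $\Delta:=a^{q+1}-b^{q+1}$ is nonzero by the invertibility hypothesis (Lemma \ref{lem_inv_ab}), so that $a',b'$ are well defined. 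A one-line computation then gives $a'a+b'b^q=(a^{q+1}-b^{q+1})/\Delta=1$ and $a'b+b'a^q=(a^qb-ba^q)/\Delta=0$. Hence $f_{a',b'}(f_{a,b}(x))=x$ for all $x$, i.e.\ $f_{a',b'}\circ f_{a,b}=\mathrm{id}_{\F_{q^2}}$.

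Finally, since $f_{a,b}$ is a bijection of the finite set $\F_{q^2}$, a left inverse is automatically the two-sided inverse, so $f^{-1}=f_{a',b'}$, as claimed. There is essentially no obstacle here; the only points demanding any care are the reduction $x^{q^2}=x$ when expanding $(\,\cdot\,)^q$, and the observation that $\Delta\ne 0$ guarantees $a',b'$ make sense. (One could instead confirm $a'^{\,q+1}\ne b'^{\,q+1}$ in advance to place $f_{a',b'}$ in $\GL(V)$ via Lemma \ref{lem_inv_ab}, but this is unnecessary once one appeals to finiteness.)
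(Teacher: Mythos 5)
Your proof is correct and is exactly the ``direct check'' that the paper performs but omits: you compose $f_{a',b'}\circ f_{a,b}$, reduce via $x^{q^2}=x$, and verify the coefficients are $1$ and $0$, then use finiteness to upgrade the left inverse to a two-sided inverse. Nothing further is needed.
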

\begin{proof}
This follows by direct check, and we omit the details.
\end{proof}

Take an element $t\in \F_{q^2}^*$. For $f_{c,d}\in \GL(V)$, we compute that
\begin{align}
f_{c,d}^{-1}( f_{t,0}(f_{c,d}(x)))
=&\frac{1}{c^{q+1}-d^{q+1}} \left( c^q(tcx+tdx^q)- d(tcx+tdx^q)^q \right) \notag\\
=&\frac{tc^{q+1}-t^qd^{q+1}}{c^{q+1}-d^{q+1}}x+\frac{c^qd(t-t^q)}{c^{q+1}-d^{q+1}}x^q,\label{eqn_ft0_conj}
\end{align}
where we used Lemma \ref{lem_inver} in the first equality. If $d=0$, then it equals $f_{t,0}(x)$. If $d\ne 0$, we write $s:=cd^{-1}$, so that $f_{c,d}^{-1}( f_{t,0}(f_{c,d}(x)))=L_{t,s}(x)$, where
\begin{equation}\label{eqn_Lts}
 	L_{t,s}(x):=\frac{ts^{q+1}-t^q}{s^{q+1}-1}x+\frac{(t-t^q)s^q}{s^{q+1}-1}x^q.
\end{equation}
Here, we have $s^{q+1}\ne 1$ by Lemma \ref{lem_inv_ab}. In particular, we have $L_{t,0}(x)=t^qx$. To be consistent, we write
\begin{equation}\label{eqn_Lts_ing}
 	L_{t,\infty}(x):=tx,\;\;\textup{i.e.,}\;\; L_{t,\infty}=f_{t,0}.
\end{equation}


\begin{lemma}\label{lem_Ct}
For $t\in\F_{q^2}^*$, the conjugacy class of $\GL(V)$ containing $L_{t,\infty}=f_{t,0}$ is
$C_t:=\{L_{t,s}:\,s\in S'\}$, where $S'=\{u\in\F_{q^2}:\,u^{q+1}\ne 1\}\cup\{\infty\}$.
If $t\not\in\F_q$, then $C_t=C_{t^q}$ and $|C_t|=q(q-1)$.
\end{lemma}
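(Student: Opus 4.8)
The plan is to assemble the conjugation computation already carried out in \eqref{eqn_ft0_conj}--\eqref{eqn_Lts_ing} for the set equality, then pin down the size by an orbit--stabilizer argument. For the set equality, I would first observe that $f_{c,d}\mapsto f_{c,d}^{-1}f_{t,0}f_{c,d}$ sweeps out the \emph{entire} conjugacy class of $f_{t,0}$ as $f_{c,d}$ ranges over $\GL(V)$: replacing $f_{c,d}$ by its inverse (again in $\GL(V)$ by Lemma \ref{lem_inver}) shows that this collection really is $\{g\,f_{t,0}\,g^{-1}:g\in\GL(V)\}$. By \eqref{eqn_ft0_conj}, each such conjugate equals $f_{t,0}=L_{t,\infty}$ when $d=0$ and equals $L_{t,s}$ with $s=cd^{-1}$ when $d\ne 0$. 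The admissibility condition $c^{q+1}\ne d^{q+1}$ from \eqref{eqn_fab} is equivalent to $(cd^{-1})^{q+1}\ne 1$, and conversely every $s\in\F_{q^2}$ with $s^{q+1}\ne 1$ is realized (take $c=s,\ d=1$). Hence the parameter $s$ ranges over exactly $S'=\{u\in\F_{q^2}:u^{q+1}\ne 1\}\cup\{\infty\}$, giving $C_t=\{L_{t,s}:s\in S'\}$.

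For the identity $C_t=C_{t^q}$ when $t\notin\F_q$, I would conjugate by the Frobenius map. The transformation $f_{0,1}\colon x\mapsto x^q$ lies in $\GL(V)$ (here $a^{q+1}=0\ne 1=b^{q+1}$, cf.\ Lemma \ref{lem_inv_ab}) and is its own inverse. A direct substitution gives $f_{0,1}\circ f_{t,0}\circ f_{0,1}^{-1}\colon x\mapsto (tx^q)^q=t^qx=f_{t^q,0}(x)$, so $f_{t^q,0}\in C_t$ and therefore $C_{t^q}=C_t$. (Equivalently, for $t\notin\F_q$ the maps $f_{t,0}$ and $f_{t^q,0}$ share the irreducible characteristic polynomial $X^2-(t+t^q)X+t^{q+1}\in\F_q[X]$ and hence are conjugate in $\GL(V)$.)

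The size $|C_t|=q(q-1)$ is the one genuine computation, and I would obtain it from orbit--stabilizer, which is where I expect the only real care to be needed. The centralizer of $f_{t,0}$ in $\GL(V)$ consists of those $f_{c,d}$ with $f_{c,d}\circ f_{t,0}=f_{t,0}\circ f_{c,d}$; comparing the coefficients of $x$ and of $x^q$ reduces this to the single condition $d(t^q-t)=0$. Since $t\notin\F_q$ forces $t^q\ne t$, the centralizer is exactly $\{f_{c,0}:c\in\F_{q^2}^*\}$, of order $q^2-1$. Using $|\GL(V)|=(q^2-1)(q^2-q)$, I conclude
\[
|C_t|=\frac{(q^2-1)(q^2-q)}{q^2-1}=q(q-1).
\]
As a consistency check, $|S'|=\bigl(q^2-(q+1)\bigr)+1=q^2-q$ matches $|C_t|$, so the parametrization $s\mapsto L_{t,s}$ is in fact a bijection from $S'$ onto $C_t$; the main obstacle, such as it is, is simply ensuring the centralizer is identified correctly so that no conjugate is over- or under-counted.
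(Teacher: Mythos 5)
Your proposal is correct and follows essentially the same route as the paper: the set equality is read off from the conjugation computation \eqref{eqn_ft0_conj}, the identity $C_t=C_{t^q}$ comes from the $s=0$ conjugate (your Frobenius conjugation is exactly the case $c=0$, $d=1$), and the size follows from orbit--stabilizer once the centralizer is identified as $\{f_{c,0}:c\in\F_{q^2}^*\}$. The only cosmetic difference is that you compute the centralizer directly by comparing coefficients, whereas the paper deduces it from the fact that $L_{t,s}\ne L_{t,\infty}$ for all $s\ne\infty$ in $S'$; both are sound.
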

\begin{proof}
The first part has been established in the preceding arguments. Assume that $t\not\in\F_q$. We have $L_{t,0}(x)=t^qx$, which is in the conjugacy class $C_t$ and also equals $L_{t^q,\infty}$, so $C_t=C_{t^q}$. For $s\in\F_{q^2}^*$ with $s^{q+1}\ne 1$,  the two transformations $L_{t,s}$ and $L_{t,\infty}$ are distinct since they have different degrees. Also, $L_{t,0}\ne L_{t,\infty}$, since $t\not\in\F_q$.  We deduce from the preceding arguments that the stabilizer of $L_{t,\infty}$ under the action of $\GL(V)$ via conjugation is $\{f_{c,0}:\,c\in\F_{q^2}^*\}$. Therefore,
\[
|C_t|=\frac{|\GL(V)|}{q^2-1}=\frac{(q^2-1)(q^2-q)}{q^2-1}=q(q-1).
\]
\end{proof}

\begin{lemma}\label{lem_char_X}
For $g\in \GL(V)$ with $V=\F_{q^2}$, its quotient image $\bar{g}$ in $\PGL(V)$ is in the set $A$ in Theorem \ref{conj_main} if and only if it is conjugate to $f_{t,0}$ in $\GL(V)$ for some element $t\in\F_{q^2}^*$ such that $t^{2(q-1)}\ne 1$.
\end{lemma}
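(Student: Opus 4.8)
The plan is to translate the membership $\bar g\in A$ into matrix-theoretic conditions and then carry out a short classification of the possible types of $g$. Recall that $A$ consists of the elements $h\in\PGL(V)$ with $h^{q+1}=1$ and $h^2\ne1$, and that the kernel of the quotient map $\GL(V)\to\PGL(V)$ is exactly the group of scalar transformations (the maps $f_{\lambda,0}$ with $\lambda\in\F_q^*$). Hence $\bar g\in A$ is equivalent to the statement that $g^{q+1}$ is a scalar transformation while $g^2$ is not.

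For the ($\Leftarrow$) direction I would use that $f_{t,0}$ is just multiplication by $t$, so $f_{t,0}^k=f_{t^k,0}$ for every $k$. Thus $g^{q+1}$ is conjugate to $f_{t^{q+1},0}$; since $t^{q+1}=N_{\F_{q^2}/\F_q}(t)\in\F_q^*$, this is a scalar map, giving $\bar g^{q+1}=1$. Likewise $g^2$ is conjugate to $f_{t^2,0}$, which is scalar exactly when $t^2\in\F_q$, i.e. when $t^{2(q-1)}=1$; so the hypothesis $t^{2(q-1)}\ne1$ yields $\bar g^2\ne1$. Being scalar is conjugation-invariant, so it does no harm that $g$ is only assumed conjugate to $f_{t,0}$.

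The substance lies in the ($\Rightarrow$) direction, where the two conditions must be shown to force $g$ to be non-split semisimple. First, $g$ cannot be scalar, since then $g^2$ would be scalar. Next I would rule out the non-semisimple case: such a $g$ is conjugate to a Jordan block $\begin{pmatrix}\lambda&1\\0&\lambda\end{pmatrix}$ with $\lambda\ne0$, whose $(q+1)$-st power has upper-right entry $(q+1)\lambda^q$. Writing $p$ for the characteristic, we have $q+1\equiv1\pmod p$ since $p\mid q$, so $(q+1)\lambda^q\ne0$ and $g^{q+1}$ is not scalar, a contradiction. Hence $g$ is diagonalizable over $\overline{\F}_q$, say with eigenvalues $\alpha,\beta$. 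If both lay in $\F_q$, then $g^{q+1}$ would have eigenvalues $\alpha^2,\beta^2$, and being scalar would force $\alpha^2=\beta^2$, whence $g^2$ scalar — again impossible. Therefore the characteristic polynomial of $g$ is irreducible over $\F_q$, so $\beta=\alpha^q$ with $\alpha\in\F_{q^2}\setminus\F_q$.

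Finally I would identify $g$ with $f_{\alpha,0}$. The map $f_{\alpha,0}$ (multiplication by $\alpha$) has characteristic polynomial equal to the minimal polynomial $(X-\alpha)(X-\alpha^q)$ of $\alpha$ over $\F_q$, the same irreducible quadratic as that of $g$; two $2\times2$ matrices over $\F_q$ with equal irreducible characteristic polynomial share the companion matrix as rational canonical form and so are conjugate in $\GL(V)$. Setting $t=\alpha\in\F_{q^2}^*$, the condition $t^{2(q-1)}\ne1$ then follows from $\bar g^2\ne1$ exactly as in the first direction, since $g^2$ is conjugate to $f_{t^2,0}$, which is scalar precisely when $t^{2(q-1)}=1$. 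I expect the main obstacle to be organizing the case distinction cleanly and justifying the conjugacy of $g$ with $f_{t,0}$ via rational canonical form; everything else reduces to direct computation together with the key arithmetic fact $p\nmid q+1$.
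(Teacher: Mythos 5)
Your proof is correct and follows essentially the same route as the paper: both reduce the forward direction to showing that the minimal (equivalently, characteristic) polynomial of $g$ is an irreducible quadratic and then conjugate $g$ to the multiplication map $f_{t,0}$ by a root $t$. The only differences are cosmetic: you rule out the non-semisimple case by a Jordan-block computation using $q+1\equiv 1\pmod p$, where the paper instead notes $g^{q^2-1}=1$ forces a squarefree minimal polynomial, and you invoke the rational canonical form for the conjugacy where the paper writes down the base-change explicitly.
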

\begin{proof}
Take $g\in \GL(V)$ such that $\bar{g}\in A$, i.e., $\bar{g}^{q+1}=1$, $\bar{g}^2\ne 1$. Then $g^{q+1}$ is the scalar multiplication by some $\lambda\in\F_q^*$. In particular, $g^{q^2-1}=1$.  In this proof, we regard $g$ as a $2\times 2$ matrix over $\F_q$ by specifying a basis of $V=\F_{q^2}$ over $\F_q$. Let $m(X)$ be the minimal polynomial of the matrix $g$, which lies in $\F_q[X]$. Since $g^{q^2-1}=1$, we have $m(X)|X^{q^2-1}-1$. Since $X^{q^2-1}-1$ has no repeated roots, so does $m(X)$. Also, by the Cayley-Hamilton Theorem, we have $m(g)=0$.

We claim that $m(X)$ is an irreducible polynomial of degree $2$ over $\F_q$.  If $m(X)$ has degree $1$, then $g$ is a scalar matrix, and $\bar{g}=1$, which is impossible.  If $m(X)$ has two roots $\lambda_1,\lambda_2$ in $\F_q$, then $\lambda_1\ne\lambda_2$ by the previous paragraph and so $g$ is conjugate to the diagonal matrix $\diag(\lambda_1,\lambda_2)$ in $\GL(V)$. Since $g^{q+1}$ is a scalar matrix, so should be $\diag(\lambda_1^{q+1},\lambda_2^{q+1})$. We thus have $\lambda_1^2=\lambda_2^2$, i.e., $\lambda_1=\pm\lambda_2$. This leads to a contradiction that $\bar{g}^2=1$, and thus the claim follows.

Write $m(X)=X^2+cX+d$ with $c,d\in\F_q$, and let $t$ be a root of $m(X)$ in $\F_{q^2}$. Then $t\not\in\F_q$, and $m(X)=(X-t)(X-t^q)$.  Set $v:=g(1)$. We claim that $v\not\in \F_{q}$: otherwise, $g(1)=v\cdot 1$ and $v$ is an eigenvalue of $g$, so $m(v)=0$, contradicting the irreducibility of $m(X)$. We have $g(g(1))+cg(1)+d=0$, so $g(v)=-d-cv$. Therefore, $(g(1),g(v))=(1,v)M$ with $M=\begin{pmatrix}0&-d\\1&-c\end{pmatrix}$. It is routine to check
$(f_{t,0}(1),f_{t,0}(t))=(1,t)M$. Let $h$ be the element of $\GL(V)$ that maps the ordered basis $(1,t)$ to $(1,v)$. Then $g$ is mapped to $f_{t,0}$ by $h$ via conjugation, i.e., $g$ is in the conjugacy class of $f_{t,0}$ in $\GL(V)$.

We claim that $t^{2(q-1)}\ne 1$. Otherwise, $t^2\in\F_q^*$ and $X^2-t^2$ is the minimal polynomial of $t$ over $\F_q$. It follows that $m(X)=X^2-t^2$. Hence $g(g(x))=t^2x$, and $\bar{g}^2=1$: a contradiction. This proves the necessity part of the lemma.

For the sufficient part, it suffices to verify that $\overline{f_{t,0}}^{q+1}=1$ and $\overline{f_{t,0}}^{2}\ne 1$ provided that $t^{2(q-1)}\ne 1$. This is clear since $f_{t,0}^{q+1}(x)=t^{q+1}x$, $f_{t,0}^{2}(x)=t^{2}x$, and $t^{q+1}$ is in $\F_q^*$ while $t^2$ is not under the assumption on $t$. This completes the proof.
\end{proof}

The following is an immediate corollary.
\begin{corollary}\label{cor_char_X}
Let $A$ be the set as in Theorem \ref{conj_main}, regard $\F_{q^2}$ as a $2$-dimensional vector space over $\F_q$ so that $\GL(2,q)=\GL(V)$. Then the full preimage of $A$ in $\GL(V)$
is the set $\tilde{A}:=\cup_{t\in T}C_t$, where $T=\{t\in\F_{q^2}^*:\,t^{2(q-1)}\ne 1\}$.
\end{corollary}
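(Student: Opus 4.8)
The plan is to deduce the corollary directly from Lemma \ref{lem_char_X} together with Lemma \ref{lem_Ct}, since all the substantive work has already been done. First I would unpack the meaning of ``full preimage'': writing $\pi:\GL(V)\to\PGL(V)$ for the natural quotient map, the full preimage of $A$ is by definition the set $\{g\in\GL(V):\,\bar{g}\in A\}$, where $\bar{g}=\pi(g)$. Lemma \ref{lem_char_X} characterizes exactly this set: $\bar{g}\in A$ if and only if $g$ is conjugate in $\GL(V)$ to $f_{t,0}$ for some $t\in\F_{q^2}^*$ with $t^{2(q-1)}\ne 1$, i.e. for some $t\in T$. Hence the full preimage of $A$ equals the union, over all $t\in T$, of the $\GL(V)$-conjugacy class of $f_{t,0}$. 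By Lemma \ref{lem_Ct}, the conjugacy class of $f_{t,0}=L_{t,\infty}$ is precisely $C_t$, so the preimage is $\cup_{t\in T}C_t=\tilde{A}$, as claimed.

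The only points requiring a little care are bookkeeping ones. I would note that every $t\in T$ lies outside $\F_q$: for $t\in\F_q^*$ one has $t^{q-1}=1$ and hence $t^{2(q-1)}=1$, so $T\subseteq\F_{q^2}^*\setminus\F_q^*$. This confirms that the second part of Lemma \ref{lem_Ct} (where $t\not\in\F_q$ is assumed) applies to every index $t$ occurring in the union, and that no spurious classes with $t\in\F_q$ can enter. I would also observe that the defining condition of $T$ is stable under $t\mapsto t^q$, since $x\mapsto x^q$ is a field automorphism fixing $1$; this matches the identity $C_t=C_{t^q}$ of Lemma \ref{lem_Ct}, so the classes in the union are indexed with a harmless two-to-one redundancy.

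I do not anticipate any genuine obstacle: the entire content lies in Lemma \ref{lem_char_X}, and the corollary merely rephrases that conjugacy characterization in terms of the named classes $C_t$. The subtlest step is simply verifying that the index set $T$ in the statement coincides with the set of $t$ produced by Lemma \ref{lem_char_X}, which is immediate from comparing the two conditions $t^{2(q-1)}\ne 1$.
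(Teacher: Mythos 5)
Your proof is correct and is exactly the deduction the paper has in mind: the paper states this as an immediate corollary of Lemma \ref{lem_char_X} (with $C_t$ identified as the conjugacy class of $f_{t,0}$ via Lemma \ref{lem_Ct}) and gives no further argument. Your additional bookkeeping observations ($T\cap\F_q^*=\emptyset$ and the stability of $T$ under $t\mapsto t^q$) are accurate and consistent with what the paper notes later in Section 3.
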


\begin{lemma}\label{lem_Ct_conj}
Take the same notation as in Lemma \ref{lem_Ct}. For $t,t'\in \F_{q^2}^*$, we have $C_t=C_{t'}$ if and only if $t'=t$ or $t'=t^q$.
\end{lemma}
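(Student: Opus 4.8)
The plan is to reduce the set-equality $C_t=C_{t'}$ to an equality of characteristic polynomials, exploiting that $C_t$ is by definition a conjugacy class in $\GL(V)$ and that conjugate matrices share the same characteristic polynomial. The \emph{if} direction is immediate: when $t'=t$ there is nothing to prove, and when $t'=t^q$ the equality $C_t=C_{t^q}$ is furnished by Lemma \ref{lem_Ct} for $t\notin\F_q$, while for $t\in\F_q$ the hypothesis $t'=t^q$ collapses to $t'=t$. So the content lies entirely in the \emph{only if} direction.

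For that direction, I would first identify $f_{t,0}$ concretely: it is the multiplication-by-$t$ map on $\F_{q^2}$ regarded as a $2$-dimensional $\F_q$-vector space. Its characteristic polynomial over $\F_q$ is therefore
\[
p_t(X)=(X-t)(X-t^q)=X^2-(t+t^q)X+t^{q+1},
\]
whose coefficients $t+t^q$ and $t^{q+1}$ are the trace and norm of $t$ over $\F_q$ and hence lie in $\F_q$, so indeed $p_t\in\F_q[X]$. This description is uniform across the two cases: for $t\notin\F_q$ it is the minimal polynomial of $t$, and for $t\in\F_q$ it is $(X-t)^2$.

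Now suppose $C_t=C_{t'}$. Then $f_{t',0}\in C_{t'}=C_t$ is conjugate to $f_{t,0}$ in $\GL(V)$, and so $p_{t'}=p_t$. Since $t'$ is a root of $p_{t'}=p_t=(X-t)(X-t^q)$, we conclude $t'\in\{t,t^q\}$, which is exactly the claim.

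I expect no serious obstacle here; the only points requiring a little care are to confirm that the coefficients of $p_t$ genuinely lie in $\F_q$ (so that $p_t$ really is the characteristic polynomial of a matrix over $\F_q$) and to handle the degenerate case $t\in\F_q$, where $C_t$ is a singleton and the argument still yields $t'=t=t^q$. As an alternative that sidesteps characteristic polynomials, one could argue directly from the parametrization \eqref{eqn_Lts}: the relation $f_{t',0}=L_{t,s}$ forces the $x^q$-coefficient $(t-t^q)s^q/(s^{q+1}-1)$ of $L_{t,s}$ to vanish, which for $t\notin\F_q$ happens only when $s\in\{0,\infty\}$; and these two values give $L_{t,\infty}=f_{t,0}$ and $L_{t,0}=f_{t^q,0}$ respectively, again pinning $t'$ down to $\{t,t^q\}$.
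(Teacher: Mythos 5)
Your proof is correct and follows essentially the same route as the paper: both arguments hinge on a conjugation-invariant polynomial attached to $f_{t,0}$ whose roots are exactly $t$ and $t^q$ (the paper uses the minimal polynomial computed in the proof of Lemma \ref{lem_char_X}; you use the characteristic polynomial $X^2-(t+t^q)X+t^{q+1}$, which has the minor advantage of a uniform formula across the cases $t\in\F_q$ and $t\notin\F_q$). No gaps.
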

\begin{proof}
We have $C_t=C_{t^q}$ by Lemma \ref{lem_Ct}. The minimal polynomial of the $\F_q$-linear transformation  $f_{t,0}$ is $X-t$ or $(X-t)(X-t^q)$ according as $t$ is in $\F_q$ or not, as we showed in the proof of Lemma \ref{lem_char_X}. Since conjugate transformations has the same minimal polynomial, the claim now follows.
\end{proof}

We shall need the following technical lemma in the next section.
\begin{lemma}\label{lem_disc}
Suppose that $q$ is odd, and let $x_0$ be a root of $F(x)=cx^2+dx+c^q$, where $c\in\F_{q^2}^*$, $d\in\F_q^*$. Then $x_0^{q+1}=1$ if and only if $\Delta_F=d^2-4c^{q+1}$ is zero or a nonsquare in $\F_q$.
\end{lemma}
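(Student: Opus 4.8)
The plan is to write the two roots of $F$ explicitly and to identify $x_0^{q+1}$ with the field norm $x_0\cdot x_0^q$ of $x_0$ down to $\F_q$. First I would record that $\Delta_F=d^2-4c^{q+1}$ lies in $\F_q$: indeed $d\in\F_q$ gives $d^2\in\F_q$, while $c^{q+1}=c\cdot c^q$ is fixed by the Frobenius $x\mapsto x^q$ and hence also lies in $\F_q$. The key preliminary observation is that, since $q$ is odd, every element of $\F_q$ is a square in $\F_{q^2}$; concretely, for $a\in\F_q^*$ one has $a^{(q^2-1)/2}=(a^{q-1})^{(q+1)/2}=1$. Consequently $\Delta_F$ has a square root $\delta\in\F_{q^2}$, so the two roots of $F$ are $x_0=(-d+\delta)/(2c)$ and $(-d-\delta)/(2c)$, both lying in $\F_{q^2}$. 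In particular $x_0^{q+1}=x_0\cdot x_0^q$ is the norm of $x_0$ to $\F_q$, and the condition $x_0^{q+1}=1$ is well posed and (as the computation will show) independent of the choice of root.

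Next I would pin down the action of Frobenius on $\delta$. From $\delta^2=\Delta_F\in\F_q$ we get $(\delta^q)^2=\Delta_F^q=\delta^2$, whence $\delta^q=\pm\delta$; moreover $\delta^q=\delta$ exactly when $\delta\in\F_q$, i.e. when $\Delta_F$ is zero or a nonzero square in $\F_q$, and $\delta^q=-\delta$ exactly when $\Delta_F$ is a nonsquare in $\F_q$. Using $d^q=d$ and $2^q=2$ I would then compute
\[
x_0^{q+1}=\frac{-d+\delta}{2c}\cdot\frac{-d+\delta^q}{2c^q}=\frac{(-d+\delta)(-d+\delta^q)}{4c^{q+1}}.
\]
The whole lemma now reduces to evaluating the numerator in each case, the single algebraic input being $\delta^2=\Delta_F=d^2-4c^{q+1}$, equivalently $d^2-\delta^2=4c^{q+1}$.

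The three cases then fall out directly. When $\Delta_F$ is a nonsquare, $\delta^q=-\delta$, so the numerator collapses to $(-d+\delta)(-d-\delta)=d^2-\delta^2=4c^{q+1}$ and $x_0^{q+1}=1$. When $\Delta_F=0$, we have $\delta=0$ and $d^2=4c^{q+1}$, giving numerator $d^2=4c^{q+1}$ and again $x_0^{q+1}=1$. Finally, when $\Delta_F$ is a nonzero square, $\delta^q=\delta$ and the numerator is $(d-\delta)^2$; imposing $x_0^{q+1}=1$ amounts to $(d-\delta)^2=4c^{q+1}=d^2-\delta^2$, which simplifies to $\delta(\delta-d)=0$. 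Since $\delta\ne0$ this would force $\delta=d$, hence $\Delta_F=d^2$ and $c^{q+1}=0$, contradicting $c\ne0$; thus $x_0^{q+1}\ne1$. Combining the three cases yields the stated equivalence.

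I expect the only genuinely delicate point to be the preliminary step that the roots lie in $\F_{q^2}$, i.e. the fact that $\F_q$ sits inside the squares of $\F_{q^2}^*$ together with $0$. This is what makes $x_0^{q+1}$ literally a norm and hands us the clean dichotomy $\delta^q=\pm\delta$; once $\delta\in\F_{q^2}$ is secured, everything else is routine algebra driven by the identity $d^2-\delta^2=4c^{q+1}$.
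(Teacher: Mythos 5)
Your proposal is correct and follows essentially the same route as the paper: take a square root $\delta$ of $\Delta_F$ in $\F_{q^2}$, write $x_0=(-d\pm\delta)/(2c)$, and evaluate the norm $x_0^{q+1}=x_0x_0^q$ in the three cases according to whether $\Delta_F$ is zero, a nonsquare, or a nonzero square, using $\delta^q=\pm\delta$ (the paper phrases this via $r+r^q$ and $r^{q+1}$, which amounts to the same dichotomy). Your explicit justification that every element of $\F_q$ is a square in $\F_{q^2}$ is a welcome detail the paper leaves implicit, but it does not change the argument.
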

\begin{proof}
Take $r\in\F_{q^2}^*$ such that $r^2=\Delta_F$. This is possible since  $\Delta_F$ is in $\F_q$. We solve that $x_0=\frac{-d+\epsilon r}{2c}$ for some $\epsilon=\pm 1$, so
\begin{align*}
(2c)^{q+1}x_0^{q+1}=(-d+\epsilon r)(-d+\epsilon r^q)
=d^2-d\epsilon(r+r^q)+r^{q+1}.
\end{align*}
We consider three separate cases.
\begin{enumerate}
\item[(1)] If $\Delta_F=0$, then $4c^{q+1}=d^2$, $r=0$, and $4c^{q+1}x_0^{q+1}=d^2$, so $x_0^{q+1}=1$.
\item[(2)] If $\Delta_F$ is a nonsquare in $\F_q^*$, then the minimal polynomial of $r$ over $\F_q$ is $X^2-\Delta_F$, and so $r^q+r=0$, $r^{q+1}=-\Delta_F$. In this case, $4c^{q+1}x_0^{q+1}=d^2-\Delta_F=4c^{q+1}$, and we have $x_0^{q+1}=1$.
\item[(3)] If $\Delta_F$ is a square in $\F_q^*$, then $r\in\F_q^*$. In this case, $4c^{q+1}x_0^{q+1}=d^2-2d\epsilon r+r^2$. If $x_0^{q+1}=1$, then we deduce that $-2d\epsilon r+2r^2=0$, i.e., $d\epsilon=r$. It follows that $d^2=r^2=d^2-4c^{q+1}$, yielding $c=0$: a contradiction. Hence $x_0^{q+1}\ne 1$ in this case.
\end{enumerate}
This completes the proof.
\end{proof}

\section{Proof of Theorem \ref{conj_main}}
This section is devoted to the proof of Theorem \ref{conj_main}, which is divided into a series of lemmas. We regard $\F_{q^2}$ as a two-dimensional vector space $V$ over $\F_q$, and identify $\GL(2,q)$ with $\GL(V)$. The cases $q=2,3$ of Theorem \ref{conj_main} can be directly verified by computer, so we assume that $q>3$ in the sequel. We introduce the following subset of $\GL(V)$:
\begin{align*}
\tilde{D}:=\{f_{t,0},\,f_{0,t}:\,t\in\F_{q^2}^*\},
\end{align*}
where $f_{a,b}$ is as in \eqref{eqn_fab}. It is a subgroup of order $2(q^2-1)$, and its quotient image in $\PGL(V)$ is a dihedral subgroup $D$ of order $2(q+1)$. When $q>3$, there is exactly one conjugacy class of dihedral subgroups of order $2(q+1)$ in $\PGL(2,q)$, cf. \cite{Dickson1901,King2005}. We can thus take this $D$ as the one in
Theorem \ref{conj_main}.

It is convenient to introduce the following notation:
\begin{align}
S&:=\{s\in\F_{q^2}^*:\,s^{q+1}\ne 1\},\label{eqn_S}\\
T&:=\{t\in\F_{q^2}^*:\,t^{2(q-1)}\ne 1\}.\label{eqn_T}
\end{align}
We have $\F_q^*\cap T=\emptyset$, and $|T|=(q^2-1)-(q-1)=q(q-1)$ or $|T|= (q^2-1)-2(q-1)=q(q-1)$ according as $q$ is even or odd. The full preimage $\tilde{A}$ of the set $A$ in Theorem \ref{conj_main} is the union of conjugacy class $C_t$'s by Corollary \ref{cor_char_X}, where $t$ ranges over $T$. Let $T_1$ be a subset of $T$ such that it contains exactly one element of $\{t,t^q\}$ for each $t\in T$. Then $|T_1|=\frac{1}{2}|T|$, and $\tilde{A}$ is the disjoint union of $C_t$'s with $t\in T_1$ by Lemma \ref{lem_Ct_conj}.

We now reformulate Theorem \ref{conj_main} in terms of $\GL(V)$. To show that $A\cdot D=\lambda \cdot \PGL(2,q)$, where $\lambda=q$ or $q-1$ according as $q$ is even or odd, it suffices to show that $\tilde{A}\cdot\tilde{D}=|T|\cdot \GL(V)$. Recall that the conjugacy class $C_t$ consists of $L_{t,s}$'s with $s\in S\cup\{0\}\cup\{\infty\}$, cf. \eqref{eqn_Lts} and \eqref{eqn_Lts_ing}.

\begin{lemma}\label{lem_str_ab0}
For $s\in S$, $t\in T$ and $r\in\F_{q^2}^*$, if $f_{a,b}=L_{t,s}\cdot f_{r,0}$ or $f_{a,b}=L_{t,s}\cdot f_{0,r}$, then $ab\ne 0$.
\end{lemma}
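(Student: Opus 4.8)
The plan is to write $L_{t,s}=f_{A,B}$ with
$A=\frac{ts^{q+1}-t^q}{s^{q+1}-1}$ and $B=\frac{(t-t^q)s^q}{s^{q+1}-1}$ as in \eqref{eqn_Lts}, compute the two products explicitly, and thereby reduce the claim $ab\ne 0$ to the two separate statements $A\ne 0$ and $B\ne 0$. First I would record the composition rule for the maps $f_{a,b}$: since $x^{q^2}=x$ for all $x\in\F_{q^2}$, a direct expansion gives $f_{a,b}\circ f_{c,d}=f_{ac+bd^q,\,ad+bc^q}$, where the group product in $\GL(V)$ is composition (consistent with the convention used to derive $L_{t,s}$). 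Applying this with $f_{r,0}$ and $f_{0,r}$ on the right yields $f_{a,b}=f_{Ar,\,Br^q}$ in the first case and $f_{a,b}=f_{Br^q,\,Ar}$ in the second. In either case $ab=AB\,r^{q+1}$, and since $r\in\F_{q^2}^*$ forces $r^{q+1}\ne 0$, the lemma reduces to showing $A\ne 0$ and $B\ne 0$.

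The factor $B\ne 0$ is immediate. Because $t\in T$ and $\F_q^*\cap T=\emptyset$, we have $t\notin\F_q$, hence $t\ne t^q$; and $s\in S$ gives both $s\ne 0$ and $s^{q+1}\ne 1$. Thus the numerator $(t-t^q)s^q$ and the denominator $s^{q+1}-1$ of $B$ are both nonzero, so $B\ne 0$.

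The only substantive point is $A\ne 0$, which is equivalent to $s^{q+1}\ne t^{q-1}$. I would settle this by comparing where the two sides live. On one hand, $(s^{q+1})^q=s^{q^2+q}=s^{q+1}$, so $s^{q+1}$ is fixed by the Frobenius and hence lies in $\F_q^*$. On the other hand, $t^{q-1}$ is a $(q+1)$-th root of unity, since $(t^{q-1})^{q+1}=t^{q^2-1}=1$. Any $(q+1)$-th root of unity that also lies in $\F_q^*$ has order dividing $\gcd(q+1,q-1)$, which is $2$ for odd $q$ and $1$ for even $q$; such an element therefore satisfies $x^2=1$. Consequently $t^{q-1}\in\F_q^*$ would force $t^{2(q-1)}=1$, contradicting $t\in T$. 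Hence $t^{q-1}\notin\F_q^*$, so it cannot equal $s^{q+1}\in\F_q^*$, giving $A\ne 0$ and completing the proof.

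I expect the main (and essentially only) obstacle to be the verification of $A\ne 0$. The computation reducing everything to $AB\,r^{q+1}\ne 0$ is routine, and $B\ne 0$ is transparent; the conceptual crux is the observation that a norm value $s^{q+1}$ and a $(q+1)$-th root of unity $t^{q-1}$ occupy disjoint subsets of $\F_{q^2}^*$ precisely when the defining condition $t^{2(q-1)}\ne 1$ of $T$ holds. This is exactly the place where the hypothesis $t\in T$ is used, and articulating it cleanly via $\gcd(q-1,q+1)\le 2$ is what makes the argument work uniformly in both parities of $q$.
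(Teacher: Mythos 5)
Your proposal is correct and follows essentially the same route as the paper: both compute the composition explicitly and reduce the claim to the nonvanishing of the two coefficients of $L_{t,s}$, with the only substantive point being that $s^{q+1}=t^{q-1}$ would force $t^{2(q-1)}=1$ (you argue this via $\gcd(q-1,q+1)\le 2$, the paper via $t^{(q-1)^2}=1$ combined with $t^{q^2-1}=1$; these are the same computation). Your uniform reduction $ab=AB\,r^{q+1}$ handles both cases $f_{r,0}$ and $f_{0,r}$ at once, which is a minor streamlining of the paper's ``the other case is similar.''
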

\begin{proof}
We have $L_{t,s}(f_{r,0}(x))=\frac{(ts^{q+1}-t^q)r}{s^{q+1}-1}x+\frac{(t-t^q)s^qr^q}{s^{q+1}-1}x^q$. If the coefficient of $x$ is zero, then $ts^{q+1}=t^q$, i.e., $t^{q-1}=s^{q+1}$. We deduce that $t^{(q-1)^2}=1$. Since $t^{q^2-1}=1$, we deduce that $t^{2(q-1)}=t^{q^2-1-(q-1)^2}=1$, a contradiction to $t\in T$. If the coefficient of $x^q$ is zero, then $t=t^q$, i.e., $t^{q-1}=1$, which is again a contradiction to $t^{2(q-1)}\ne 1$. The case of  $f_{a,b}=L_{t,s}\cdot f_{0,r}$ is similar, and we omit the details.
\end{proof}

\begin{lemma}\label{lem_mult_fab0}
For $f_{a,b}\in\GL(V)$ with $ab=0$, its multiplicity in $\tilde{A}\cdot\tilde{D}$ is $|T|$, which is $q(q-1)$ or $(q-1)^2$ according as $q$ is even or odd.
\end{lemma}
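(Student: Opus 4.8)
The plan is to exploit that $\tilde D$ is a subgroup and that the elements $f_{a,b}$ with $ab=0$ are precisely the elements of $\tilde D$. First I would record the elementary counting principle for a multiset product over a subgroup: the multiplicity of a fixed $g\in\GL(V)$ in $\tilde A\cdot\tilde D$ is
\[
\#\{(x,d)\in\tilde A\times\tilde D:\,xd=g\}=\#\{x\in\tilde A:\,x^{-1}g\in\tilde D\}=|\tilde A\cap g\tilde D|,
\]
since for each $x$ the factor $d=x^{-1}g$ is uniquely determined and the condition $d\in\tilde D$ is equivalent to $x\in g\tilde D$. As $ab=0$ forces $f_{a,b}\in\tilde D$, and $g\tilde D=\tilde D$ whenever $g\in\tilde D$, the multiplicity of every such $f_{a,b}$ equals the single number $|\tilde A\cap\tilde D|$, independent of $g$. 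Thus the whole lemma reduces to computing $|\tilde A\cap\tilde D|$.

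Next I would identify $\tilde A\cap\tilde D$ using the description $\tilde A=\bigcup_{t\in T}C_t$ with $C_t=\{L_{t,s}:\,s\in S\cup\{0,\infty\}\}$. An element of $\tilde D$ is exactly an $f_{a,b}$ with $ab=0$, so I must locate the members of the classes $C_t$ whose two coefficients are not both nonzero. Here Lemma \ref{lem_str_ab0}, applied with $r=1$ (so that $L_{t,s}\cdot f_{r,0}=L_{t,s}$), shows that for $t\in T$ and $s\in S$ the transformation $L_{t,s}$ has both coefficients nonzero and hence lies outside $\tilde D$. Therefore the only contributions come from the two boundary values $s\in\{0,\infty\}$, namely $L_{t,\infty}=f_{t,0}$ and $L_{t,0}=f_{t^q,0}$, both of which are diagonal maps lying in $\tilde D$.

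Finally I would count these. Since the $q$-th power map is an automorphism of $\F_{q^2}^*$, the set $T$ is closed under $t\mapsto t^q$, so $\{f_{t,0},f_{t^q,0}:\,t\in T\}=\{f_{c,0}:\,c\in T\}$; as distinct $c$ give distinct diagonal maps and $t\notin\F_q$ for $t\in T$ guarantees $f_{t,0}\ne f_{t^q,0}$, this set has exactly $|T|$ elements, none of them of the form $f_{0,c}$. Hence $\tilde A\cap\tilde D=\{f_{c,0}:\,c\in T\}$ and $|\tilde A\cap\tilde D|=|T|$. To conclude I would evaluate $|T|$ via $|\{t\in\F_{q^2}^*:\,t^{2(q-1)}=1\}|=\gcd(2(q-1),q^2-1)$, which is $q-1$ when $q$ is even and $2(q-1)$ when $q$ is odd, giving $|T|=q(q-1)$ or $(q-1)^2$ respectively. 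The only real care needed, and the main obstacle, is ensuring the list of elements of $\tilde A$ lying in $\tilde D$ is complete and collision-free; this is precisely where Lemma \ref{lem_str_ab0} does the decisive work, ruling out every $s\in S$ and leaving only the two diagonal endpoints per class.
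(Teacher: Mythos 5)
Your proof is correct and follows essentially the same route as the paper: both arguments use Lemma \ref{lem_str_ab0} to rule out the contributions with $s\in S$, leaving only the two ``diagonal'' representatives $L_{t,\infty}=f_{t,0}$ and $L_{t,0}=f_{t^q,0}$ in each class $C_t$, which yields the count $2|T_1|=|T|$. Your reformulation of the multiplicity as $|\tilde A\cap g\tilde D|=|\tilde A\cap\tilde D|$ for $g\in\tilde D$ is a slightly cleaner packaging that treats $f_{a,0}$ and $f_{0,b}$ uniformly, but the underlying count is identical to the paper's.
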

\begin{proof}
We consider only the case $f_{a,0}$ with $a\in\F_{q^2}^*$, and the case of $f_{0,b}$ with $b\in\F_{q^2}^*$ is similar. By Lemma \ref{lem_str_ab0}, it suffices to consider the number of ways to express $f_{a,0}$ in the form $L_{t,0}\cdot g$ with $g\in\tilde{D}$, $t\in T_1$.
\begin{enumerate}
\item[(1)]If $s=\infty$, $L_{t,\infty}(f_{r,0}(x))=trx$, $L_{t,\infty}(f_{0,r}(x))=trx^q$.
\item[(2)]If $s=0$, then $L_{t,0}(f_{r,0}(x))=t^qrx$, $L_{t,0}(f_{0,r}(x))=t^qrx^q$.
\end{enumerate}
We recall that $t\ne t^q$ for $t\in T$. For each $t\in T_1$, we  have $f_{a,0}=L_{t,\infty}\cdot f_{at^{-1},0}=L_{t,0}\cdot f_{at^{-q},0}$. This gives $2|T_1|$ different expressions of $f_{a,0}$. This completes the proof.
\end{proof}

It remains to consider the multiplicity of $f_{a,b}$ in $\tilde{A}\cdot\tilde{D}$, where $f_{a,b}\in GL(V)$ with $ab\ne 0$. By Lemma \ref{lem_str_ab0} and the proof of Lemma \ref{lem_mult_fab0}, it equals the number of $(s,t,r)\in S\times T_1\times\F_{q^2}^*$ such that $L_{t,s}\cdot f_{r,0}=f_{a,b}$ or $L_{t,s}\cdot f_{0,r}=f_{a,b}$. By comparing the coefficients of $x$ and $x^q$, the two equations are equivalent to
\begin{equation}\label{eqn_coeff_ab}
a=\frac{(ts^{q+1}-t^q)r}{s^{q+1}-1},\,
b=\frac{(t-t^q)s^qr^q}{s^{q+1}-1},
\end{equation}
and
\begin{equation}\label{eqn_coeff_ab2}
b=\frac{(ts^{q+1}-t^q)r}{s^{q+1}-1},\,
a=\frac{(t-t^q)s^qr^q}{s^{q+1}-1},
\end{equation}
respectively. Observe that \eqref{eqn_coeff_ab2} can be obtained from \eqref{eqn_coeff_ab} by switching $a,\,b$. We analyze \eqref{eqn_coeff_ab} in the sequel, and the conclusions hold for \eqref{eqn_coeff_ab2} by interchanging $a,b$. We deduce from \eqref{eqn_coeff_ab} that
\begin{equation}\label{eqn_ttq}
t=a r^{-1}-br^{-q}s^{-q},\quad
t^q=a r^{-1}-b r^{-q}s.
\end{equation}
Raising the expression of $t$ to $q$-th power and comparing with that of $t^q$, we deduce that $s$ is a root of the quadratic equation
\begin{equation}\label{eqn_quad}
brX^2+(a^qr-ar^q)X-(br)^{q}=0.
\end{equation}
\begin{lemma}\label{lem_quad_sol}
The quadratic equation \eqref{eqn_quad} has all its roots in $\F_{q^2}^*$.
\end{lemma}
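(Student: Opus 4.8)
The plan is to work with the abbreviations $\beta := br$ and $\gamma := a^q r - a r^q$, so that \eqref{eqn_quad} reads $\beta X^2 + \gamma X - \beta^q = 0$. Since $a,b,r$ all lie in $\F_{q^2}^*$ we have $\beta = br \neq 0$, so this is a genuine quadratic, and its constant term $-\beta^q$ is nonzero; hence $0$ is not a root. Consequently every root that lies in $\F_{q^2}$ automatically lies in $\F_{q^2}^*$, and it suffices to prove that both roots lie in $\F_{q^2}$ rather than in $\F_{q^4} \setminus \F_{q^2}$.

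First I would record two symmetry identities coming from $a,r \in \F_{q^2}$, namely $\gamma^q = a r^q - a^q r = -\gamma$ and $\beta^{q^2} = \beta$. The key step is then to show that the map $\sigma \colon x \mapsto 1/x^q$ permutes the root set of \eqref{eqn_quad}. To see this, let $x_0$ be any root and apply the $q$-power Frobenius to $\beta x_0^2 + \gamma x_0 - \beta^q = 0$; using $\gamma^q = -\gamma$ and $\beta^{q^2} = \beta$ this yields $\beta^q x_0^{2q} - \gamma x_0^q - \beta = 0$. Dividing through by $x_0^{2q}$ and writing $z = 1/x_0^q$ recovers exactly $\beta z^2 + \gamma z - \beta^q = 0$, so $\sigma(x_0)$ is again a root.

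Since $\sigma$ maps the (at most two) roots to roots, either it fixes each root or it interchanges the two. If $\sigma$ fixes a root $x_0$, then $1/x_0^q = x_0$, i.e.\ $x_0^{q+1} = 1$, and every such $x_0$ lies in $\F_{q^2}$ because $q+1 \mid q^2 - 1$. In the remaining case $\sigma$ swaps two distinct roots $x_1, x_2$ with $x_2 = 1/x_1^q$; I would argue by contradiction, assuming $x_1 \notin \F_{q^2}$. Then $x_1 \in \F_{q^4} \setminus \F_{q^2}$ and its $\F_{q^2}$-conjugate $x_1^{q^2}$ is the other root, so $x_1^{q^2} = x_2 = x_1^{-q}$, giving $x_1^{q(q+1)} = 1$. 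As the order of $x_1$ is coprime to the characteristic, it divides $q+1$, whence $x_1^{q+1} = 1$ and $x_1 \in \F_{q^2}$, contradicting $x_1 \notin \F_{q^2}$. Thus all roots lie in $\F_{q^2}^*$.

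The one genuinely delicate point is the swapping case: a priori the roots could form a conjugate pair in $\F_{q^4}$, and the whole argument hinges on first spotting that the semilinear involution $x \mapsto 1/x^q$ stabilizes the root set, which in turn relies on the trace-zero identity $\gamma^q = -\gamma$. Once that involution is in hand, the order computation forcing $x_1^{q+1} = 1$ closes the gap uniformly in every characteristic, so no separate treatment of even and odd $q$ is required.
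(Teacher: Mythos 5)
Your proof is correct, but it takes a genuinely different route from the paper's. The paper splits on the parity of $q$: for odd $q$ it checks directly that the discriminant $(a^qr-ar^q)^2+4(br)^{q+1}$ lies in $\F_q$ (the first term is the square of a trace-zero element, the second is a norm), so its square root and hence the roots lie in $\F_{q^2}$; for even $q$ it substitutes $Y=brX$ to obtain a monic quadratic with coefficients in $\F_q$, whose roots again lie in $\F_{q^2}$. You instead avoid any discriminant computation and any case split by showing that the semilinear involution $x\mapsto x^{-q}$ stabilizes the root set (via $\gamma^q=-\gamma$, $\beta^{q^2}=\beta$), and then ruling out a conjugate pair in $\F_{q^4}\setminus\F_{q^2}$ by the order argument $x_1^{q(q+1)}=1\Rightarrow x_1^{q+1}=1$. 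The checks all go through (in particular, $d\mid q(q+1)$ with $\gcd(d,q)=1$ does force $d\mid q+1$, and the constant term $-\beta^q\neq 0$ guarantees the roots are nonzero). What your approach buys is uniformity in the characteristic; what it costs is length, and it is worth noting that your key observation --- that $s^{-q}$ is again a root whenever $s$ is --- is exactly part (1) of the paper's Lemma \ref{lem_quad_viete}, proved immediately afterwards by the same Frobenius manipulation. So your argument effectively front-loads that lemma and uses it to establish Lemma \ref{lem_quad_sol}, whereas the paper keeps the two statements independent.
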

\begin{proof}
If $q$ is odd, its discriminant $\Delta=(a^qr-ar^q)^2+4(br)^{q+1}$ lies in $\F_q$ upon direct check, so the square roots of $\Delta$ and correspondingly the roots of \eqref{eqn_quad} lie in $\F_{q^2}$. If $q$ is even, we can rewrite the equation as $Y^2+(a^qr+ar^q)Y+(br)^{q+1}=0$, where $Y=brX$. The latter equation has coefficients in $\F_q$, so its roots lie in $\F_{q^2}$. This completes the proof.
\end{proof}

\begin{lemma}\label{lem_quad_viete}
If $s\in S$ is a root of \eqref{eqn_quad}, then
\begin{enumerate}
\item[(1)] $s^{-q}$ is also a root and $s+s^{-q}=-b^{-1}a^q+b^{-1}ar^{q-1}$, $s^{1-q}=(br)^{q-1}$.
\item[(2)] If the values of $t$ corresponding  to $s$ and $s^{-q}$ as in \eqref{eqn_ttq} are in $T$, then exactly one of them is in $T_1$.
\end{enumerate}
\end{lemma}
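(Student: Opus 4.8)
The plan is to treat \eqref{eqn_quad} as a quadratic in $X$ over $\F_{q^2}$ and to read off both parts from the interaction between its two roots and the Frobenius map $x\mapsto x^q$. For part (1) I would first confirm that $s^{-q}$ is a root whenever $s$ is. Applying the Frobenius to $br\,s^2+(a^qr-ar^q)s-(br)^q=0$ and using $a^{q^2}=a$, $b^{q^2}=b$, $r^{q^2}=r$, $s^{q^2}=s$, the coefficient $a^qr-ar^q$ is sent to its negative and $(br)^q$ to $br$; dividing the resulting relation by $s^{2q}$ and multiplying by $-1$ gives exactly $br(s^{-q})^2+(a^qr-ar^q)(s^{-q})-(br)^q=0$, i.e. $s^{-q}$ satisfies \eqref{eqn_quad}. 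Since $s\in S$ forces $s^{q+1}\ne 1$, we have $s\ne s^{-q}$, so $s$ and $s^{-q}$ are the two distinct roots. The identities $s+s^{-q}=-b^{-1}a^q+b^{-1}ar^{q-1}$ and $s^{1-q}=s\cdot s^{-q}$ then follow directly from Vieta's formulas for \eqref{eqn_quad}: the sum is $-(a^qr-ar^q)/(br)$, which simplifies to the stated value, and the product is the constant-over-leading ratio $-(br)^q/(br)$.

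For part (2) the key observation is that the two values of $t$ attached to $s$ and $s^{-q}$ via \eqref{eqn_ttq} form a Frobenius-conjugate pair. Writing $t_1:=ar^{-1}-br^{-q}s^{-q}$ for the value attached to $s$, the second relation in \eqref{eqn_ttq} (valid precisely because $s$ is a root) gives $t_1^q=ar^{-1}-br^{-q}s$. For the root $s^{-q}$ the attached value is $t_2:=ar^{-1}-br^{-q}(s^{-q})^{-q}=ar^{-1}-br^{-q}s$, where I used $(s^{-q})^{-q}=s^{q^2}=s$. Comparing the two expressions yields $t_2=t_1^q$.

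Finally I would combine $t_2=t_1^q$ with the hypothesis $t_1,t_2\in T$ and the construction of $T_1$. Since $\F_q^*\cap T=\emptyset$ and $t_1\in T$, we have $t_1\notin\F_q$, hence $t_1\ne t_1^q$; thus $\{t_1,t_2\}=\{t_1,t_1^q\}$ is a genuine two-element Frobenius orbit inside $T$. By construction $T_1$ contains exactly one representative of each pair $\{t,t^q\}$ with $t\in T$, so exactly one of $t_1,t_2$ lies in $T_1$, which is the claim.

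The only real subtlety is the bookkeeping of Frobenius exponents, repeatedly invoking $x^{q^2}=x$ in $\F_{q^2}$, both in the root-swapping step for part (1) and in the identity $t_2=t_1^q$; once $t_2=t_1^q$ is established the rest of part (2) is a formal consequence of how $T_1$ is defined, so I expect no genuine obstacle beyond keeping the exponent arithmetic straight.
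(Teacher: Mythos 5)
Your proof is correct and follows essentially the same route as the paper's: apply the Frobenius to \eqref{eqn_quad} and rescale by $-s^{-2q}$ to see that $s^{-q}$ is the other (distinct) root, read off the sum and product via Vieta, and observe that the two attached values of $t$ are Frobenius conjugates of one another, so that exactly one lies in $T_1$. (One remark: Vieta actually gives $s^{1-q}=-(br)^{q-1}$, so the product formula as stated carries a sign discrepancy for odd $q$; this is inherited from the lemma statement itself, the paper's proof is equally silent on it, and the product formula is not used later, so it affects neither your argument nor the paper's.)
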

\begin{proof}
(1) Suppose $brs^2+(a^qr-ar^q)s-(br)^{q}=0$. Raising both sides to the $q$-th power and then multiplying both sides by $-s^{-2q}$, we get $-(br)^q+(a^qr-ar^q)s^{-q}+brs^{-2q}=0$. The  first claim now follows. Since $s\in S$, we have $s\ne s^{-q}$ and they are the two distinct roots of \eqref{eqn_quad}. The second claim then follows from Vi$\grave{e}$te Theorem.

(2) Let $t_1=a r^{-1}-br^{-q}s^{-q}$ and $t_2=a r^{-1}-br^{-q}s$ be the two corresponding values of $t$. Then $t_1^q=t_2$ by \eqref{eqn_ttq}. The claim now follows.
\end{proof}

\begin{lemma}\label{lem_ts}
Take $(s,t,r)\in S\times \F_{q^2}^*\times\F_{q^2}^*$ such that \eqref{eqn_coeff_ab} holds.
If $q$ is even, then $t\in T$; if $q$ is odd, then $t\in T$ if and only if $r^{q-1}+a^{q-1}\ne0$.
\end{lemma}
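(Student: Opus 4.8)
The plan is to translate the condition $t\in T$, i.e.\ $t^{2(q-1)}\ne 1$, into a condition on $t^{q-1}$, and then to read off $t^{q-1}$ from the explicit expressions for $t$ and $t^q$ in \eqref{eqn_ttq}. Since $t^{2(q-1)}=(t^{q-1})^2$ and $\F_{q^2}^*$ is cyclic, the element $t$ fails to lie in $T$ precisely when $t^{q-1}$ is a square root of unity; that is, when $t^{q-1}=1$ or (only in odd characteristic) $t^{q-1}=-1$. So the whole argument reduces to locating the values $\pm 1$ of $t^{q-1}$.

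First I would dispose of the possibility $t^{q-1}=1$, which is the same as $t=t^q$ and is relevant for both parities. From \eqref{eqn_ttq}, $t=t^q$ would force $br^{-q}s^{-q}=br^{-q}s$, hence $s^{-q}=s$ and so $s^{q+1}=1$ since $b,r\ne 0$; this contradicts $s\in S$. Thus $t\ne t^q$, so $t^{q-1}\ne 1$ always. When $q$ is even, $-1=1$, so the only square root of unity is $1$, and we conclude $t\in T$ unconditionally; this settles the even case.

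It remains to handle $q$ odd, where $t\notin T$ is now equivalent to $t^{q-1}=-1$, i.e.\ to $t+t^q=0$. The key computation is to evaluate $t+t^q$ using \eqref{eqn_ttq} together with the Vi\`ete relation $s+s^{-q}=-b^{-1}a^q+b^{-1}ar^{q-1}$ from Lemma \ref{lem_quad_viete}(1). Adding the two equations in \eqref{eqn_ttq} gives $t+t^q=2ar^{-1}-br^{-q}(s+s^{-q})$; substituting the value of $s+s^{-q}$ and simplifying collapses this to $t+t^q=ar^{-1}+a^qr^{-q}$. Setting $u=a/r\in\F_{q^2}^*$, the condition $t+t^q=0$ becomes $u+u^q=0$, equivalently $u^{q-1}=-1$, i.e.\ $a^{q-1}+r^{q-1}=0$. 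Hence for odd $q$ we have $t\in T$ if and only if $r^{q-1}+a^{q-1}\ne 0$, as claimed.

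The only place demanding care is the algebra in evaluating $t+t^q$; I expect the main (mild) obstacle to be confirming that the substitution of $s+s^{-q}$ really cancels the $s$-dependence and yields the clean symmetric expression $ar^{-1}+a^qr^{-q}$. Everything else reduces to the elementary fact that, for $u\in\F_{q^2}^*$, one has $u^{q-1}=-1$ exactly when $u+u^q=0$, and to the splitting of the square-roots-of-unity condition according to the parity of $q$.
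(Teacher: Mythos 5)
Your proof is correct and follows essentially the same route as the paper: the paper factors $t^{2q}-t^2=(t^q-t)(t^q+t)$, observes that $t^q-t\ne 0$ because $s\ne s^{-q}$, and then evaluates the remaining factor via the Vi\`ete relation of Lemma \ref{lem_quad_viete}(1), which is exactly your split of the condition $(t^{q-1})^2=1$ into the cases $t^{q-1}=1$ and $t^{q-1}=-1$. Your computation $t+t^q=ar^{-1}+a^qr^{-q}$ matches the paper's $-2ar^{-1}+br^{-q}(s+s^{-q})=-ar^{-q}(r^{q-1}+a^{q-1})$, so there is nothing to add.
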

\begin{proof}
Recall that  $t\not\in T$ if and only if $t^2\ne t^{2q}$, cf. \eqref{eqn_T}. From \eqref{eqn_ttq} we deduce that
\begin{align*}
t^{2q}-t^2&=(ar^{-1}-br^{-q}s)^2-(ar^{-1}-br^{-q}s^{-q})^2 \\
             &=-2abr^{-q-1}s+b^2r^{-2q}s^2+2abr^{-q-1}s^{-q}-b^2r^{-2q}s^{-2q}\\
            &=br^{-q}(s-s^{-q})(-2ar^{-1}+br^{-q}(s+s^{-q})).
\end{align*}
Since $s^{q+1}\ne 1$, i.e., $s\ne s^{-q}$, we see that $t^{2q}-t^2=0$ iff   $-2ar^{-1}+br^{-q}(s+s^{-q})=0$.  This does not hold if $q$ is even, so $t$ is in $T$ in this case. If $q$ is odd, then  $-2ar^{-1}+br^{-q}(s+s^{-q})=-ar^{-q}(r^{q-1}+a^{q-1})$ by Lemma \ref{lem_quad_viete}, and the claim follows in this case.
\end{proof}

By Lemma \ref{lem_quad_sol} and Lemma \ref{lem_quad_viete}, for an element $r\in\F_{q^2}^*$ the two roots of \eqref{eqn_quad} are either both in $S$ or both in $\F_{q^2}^*\setminus S$, and in the former case the two roots are distinct. Set
\begin{align}
R_1:&=\{r\in\F_{q^2}^*:\,\eqref{eqn_quad} \textup{ has no roots in } S\},\\
R_2:&=\{r\in\F_{q^2}^*:\,\eqref{eqn_quad} \textup{ has two roots in } S\}.
\end{align}
The two subsets $R_1$ and $R_2$ form a partition of $\F_{q^2}^*$.
\begin{lemma}\label{lnm_cor_1}
Suppose that $q$ is odd and $r^{q-1}+a^{q-1}=0$. Then $r\in R_2$ if and only if $\kappa(b,a):=-1+b^{q+1}/a^{q+1}$ is a nonzero square in $\F_q$.
\end{lemma}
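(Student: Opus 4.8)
The plan is to turn the membership $r\in R_2$ into a single quadratic-character condition and then read it off from Lemma~\ref{lem_disc}. By the dichotomy established just before the definitions of $R_1$ and $R_2$ (both roots of \eqref{eqn_quad} lie in $S$, or both lie in $\F_{q^2}^*\setminus S$), it suffices to decide, for one root $s$, whether $s^{q+1}\neq 1$. So the whole lemma reduces to computing the quadratic character of a suitable discriminant.

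First I would feed the hypothesis $r^{q-1}=-a^{q-1}$ into \eqref{eqn_quad}. It yields $ar^q=-a^qr$, so the linear coefficient simplifies, $a^qr-ar^q=2a^qr$, and \eqref{eqn_quad} becomes $brX^2+2a^qrX-(br)^q=0$. To match the normalized shape $cX^2+dX+c^q$ of Lemma~\ref{lem_disc}, I would scale by $\lambda\in\F_{q^2}^*$ with $\lambda^{q-1}=-1$; such $\lambda$ exists because $(-1)^{q+1}=1$, so $-1$ lies in the image of $z\mapsto z^{q-1}$. Putting $c=\lambda br$ and $d=2\lambda a^qr$, the constant term becomes $-\lambda(br)^q=(\lambda br)^q=c^q$, and one checks $d^q=d$ \emph{exactly} when $r^{q-1}=-a^{q-1}$; thus the hypothesis is precisely what places $d$ in $\F_q^*$ and makes the normalization legitimate. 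Lemma~\ref{lem_disc} then gives: a root $s$ satisfies $s^{q+1}\neq 1$ if and only if $\Delta_F=d^2-4c^{q+1}$ is a nonzero square in $\F_q$. Hence $r\in R_2$ if and only if $\Delta_F$ is a nonzero square.

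The heart of the argument is the discriminant computation. Using $\lambda^{q+1}=-\lambda^2$ I get $\Delta_F=4\lambda^2r^2\big(a^{2q}+b^{q+1}r^{q-1}\big)$, and substituting $r^{q-1}=-a^{q-1}$ together with $a^{q+1}-b^{q+1}=-a^{q+1}\kappa(b,a)$ collapses this to $\Delta_F=-4\,(\lambda r a^q)^2\,\kappa(b,a)$. The decisive observation is that $w:=\lambda r a^q$ lies in $\F_q^*$: from $\lambda^q=-\lambda$ and $r^q=-a^{q-1}r$ one verifies $w^q=w$ directly. Therefore $4w^2$ is a nonzero square of $\F_q^*$, and $\Delta_F$ is a nonzero square in $\F_q$ if and only if $\kappa(b,a)$ is, once the sign carried by $\lambda$ is accounted for. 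I would finish by noting that $\kappa(b,a)\neq 0$ is automatic: since $f_{a,b}\in\GL(V)$ with $ab\neq0$, Lemma~\ref{lem_inv_ab} forces $a^{q+1}\neq b^{q+1}$, so the ``nonzero'' clause is free and the content is purely the square/nonsquare alternative. I expect the only real obstacle to be this final sign bookkeeping around $\lambda^{q-1}=-1$: it is exactly the factor that decides whether the governing quantity is $\kappa(b,a)$ or $-\kappa(b,a)=1-b^{q+1}/a^{q+1}$, so it must be tracked with care.
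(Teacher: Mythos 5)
Your approach is the same as the paper's: substitute $r^{q-1}=-a^{q-1}$ into \eqref{eqn_quad}, rescale it into the shape $cX^2+dX+c^q$ with $d\in\F_q^*$ demanded by Lemma~\ref{lem_disc}, and read off membership in $S$ from the quadratic character of the discriminant. The paper rescales by $r^{-1}a$, which yields $abX^2+2a^{q+1}X+(ab)^q=0$ directly (so $d=2a^{q+1}\in\F_q^*$ with no auxiliary $\lambda$); your rescaling by $\lambda$ with $\lambda^{q-1}=-1$ is a harmless variant, and your verifications that $d\in\F_q^*$ precisely under the hypothesis and that $w=\lambda ra^q\in\F_q^*$ are correct.

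The gap is exactly the step you defer, the ``final sign bookkeeping'' --- and there is in fact nothing left to book-keep. Your own identity $\Delta_F=-4(\lambda ra^q)^2\kappa(b,a)$ with $\lambda ra^q\in\F_q^*$ already says that $\Delta_F$ is a nonzero square if and only if $-\kappa(b,a)=1-b^{q+1}/a^{q+1}$ is; no choice of $\lambda$ can alter this, since $\lambda$ enters only through the square $(\lambda ra^q)^2$. So your computation establishes the criterion with $-\kappa(b,a)$, not $\kappa(b,a)$, and the two differ by the quadratic character of $-1$. The same conclusion falls out of the paper's normalization: the discriminant of $abX^2+2a^{q+1}X+(ab)^q$ is $4a^{q+1}(a^{q+1}-b^{q+1})=-4a^{2q+2}\kappa(b,a)$, not $4a^{2q+2}\kappa(b,a)$ as asserted there. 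A direct check with $q=7$, $a=1$, $b=1+i$, $r=i$ (where $i^2=-1$) confirms the corrected sign: here $\kappa(b,a)=1$ is a nonzero square, yet \eqref{eqn_quad} becomes $(-1+i)X^2+2iX+(1+i)=0$ with roots $-1$ and $i$, both of norm $1$, so $r\in R_1$. In other words, your instinct was right and the sign cannot be made to come out as stated: the correct criterion is that $1-b^{q+1}/a^{q+1}$ be a nonzero square, consistent with the convention $\kappa(a,b)=1-a^{q+1}/b^{q+1}$ of Lemma~\ref{lem_R1R2}. The slip is immaterial to Theorem~\ref{conj_main}, since only the symmetric sum $N_{a,b}+N_{b,a}$ is used there and the correction terms cancel under either convention, but a complete proof must either land on the corrected criterion or carry one consistent sign through Lemma~\ref{lem_Nab}; as written, your argument stops just short of the only nontrivial content of the lemma.
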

\begin{proof}
We multiply both sides of \eqref{eqn_quad} by $r^{-1}a$ and
substitute $r^{q-1}=-a^{q-1}$ to obtain $baX^2+2a^{q+1}X+(ba)^q=0$.  Its discriminant is $\Delta=4a^{2q+2}\kappa(b,a)$.  By Lemma \ref{lem_disc} and the  preceding remark, its roots lie in $S$ if and only if $\Delta$ is a nonzero square in $\F_q$. This completes the proof.
\end{proof}

\begin{lemma}\label{lem_R1R2}
Take notation as above, and set $\kappa(a,b):=1-a^{q+1}/b^{q+1}$.
\begin{enumerate}
\item[(1)] If $q$ is odd and $\kappa(a,b)$ is a nonsquare in $\F_q^*$, then $|R_1|=\frac{(q-1)(q+3)}{2}$, $|R_2|=\frac{(q-1)^2}{2}$;
\item[(2)] If $q$ is odd and $\kappa(a,b)$ is a square in $\F_q^*$, then $|R_1|=|R_2|=\frac{q^2-1}{2}$;
\item[(3)] If $q$ is even, then $|R_1|=\frac{(q+2)(q-1)}{2}$, $|R_2|=\frac{q(q-1)}{2}$.
\end{enumerate}
\end{lemma}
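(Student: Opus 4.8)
The plan is to decide, for each $r\in\F_{q^2}^*$, whether the two roots of \eqref{eqn_quad} lie in $S$ or in $\F_{q^2}^*\setminus S$ (equivalently, whether they have norm $1$), and then to count $r$ in each class. Recall from the proof of Lemma \ref{lem_quad_sol} that the discriminant of \eqref{eqn_quad} is $\Delta(r)=(a^qr-ar^q)^2+4(br)^{q+1}\in\F_q$. Since the two roots are interchanged by $s\mapsto s^{-q}$, which sends roots to roots, one root has norm $1$ exactly when both do: if $s^{q+1}=1$ then $s^{-q}=s$ is fixed, forcing the other root to be fixed as well. Hence $r\in R_1$ iff the roots have norm $1$ and $r\in R_2$ otherwise, and the whole problem reduces to reading the norm of a root off $\Delta(r)$ and then counting the $r$ that put $\Delta(r)$ into each relevant square/trace class. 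Throughout I use that the invertibility hypothesis $a^{q+1}\ne b^{q+1}$ (Lemma \ref{lem_inv_ab}) forces $\kappa(a,b)\ne0$ and, in the even case, $\eta:=b^{q+1}/a^{q+1}\ne1$; this nonvanishing is exactly what makes the relevant quadratic form nondegenerate and the relevant linear functionals independent.

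For $q$ odd I would first bring \eqref{eqn_quad} into the shape of Lemma \ref{lem_disc}. One checks the compatibility relations $\gamma/\alpha^q=\beta/\beta^q=-1$ for $\alpha=br,\ \beta=a^qr-ar^q,\ \gamma=-(br)^q$, so multiplying \eqref{eqn_quad} by any $\lambda$ with $\lambda^{q-1}=-1$ (which exists for $q$ odd) yields an equation $cX^2+dX+c^q$ with $c=\lambda br$ and $d=\lambda\beta\in\F_q$. As $\lambda^2$ is a nonsquare of $\F_q^*$ and the new discriminant equals $\lambda^2\Delta(r)$, Lemma \ref{lem_disc} gives the clean criterion: $r\in R_1$ iff $\Delta(r)$ is zero or a nonzero square of $\F_q$, and $r\in R_2$ iff $\Delta(r)$ is a nonsquare (the degenerate fibre $\beta=0$, i.e.\ $r\in a\F_q^*$, I dispatch directly, and it obeys the same rule). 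It then remains to count $r$ by the square class of $\Delta(r)$. Viewing $\F_{q^2}=\F_q^2$, the map $r\mapsto\Delta(r)$ is a binary quadratic form; writing $r=x+y\theta$ with $\theta^2=\epsilon$ a fixed nonsquare, a direct expansion gives a Gram determinant proportional to $\epsilon\,b^{q+1}\big(a^{q+1}-b^{q+1}\big)$, nonzero precisely by invertibility, so the form is nondegenerate and is isotropic iff $\kappa(a,b)$ is a nonsquare. Feeding the two form types into the standard representation numbers over $\F_q$ — the zero set of the form has $2q-1$ resp.\ $1$ points, and every nonzero value is attained $q-1$ resp.\ $q+1$ times — and removing the origin produces exactly the counts of parts (1) and (2).

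For $q$ even the substitution $Y=brX$ turns \eqref{eqn_quad} into $Y^2+\beta_0 Y+\gamma_0=0$ with $\beta_0=\Tr_{\F_{q^2}/\F_q}(a^qr)\in\F_q$ and $\gamma_0=(br)^{q+1}\in\F_q$. Tracking the norm $N(s)=N(Y)/N(br)$ shows that $r\in R_2$ exactly when $\beta_0\ne0$ and the roots lie in $\F_q$, i.e.\ $\Tr_{\F_q/\F_2}(\gamma_0/\beta_0^2)=0$, whereas $\beta_0=0$ (the fibre $r\in a\F_q^*$) and the case of roots in $\F_{q^2}\setminus\F_q$ both give $r\in R_1$. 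The crucial simplification is that $c:=\gamma_0/\beta_0^2$ is constant on each fibre of $r\mapsto w:=r^{q-1}$, so the count descends to the norm-$1$ group $\mu_{q+1}=\F_{q^2}^*\setminus S$, each of whose $q+1$ points carries $q-1$ values of $r$. Writing $w=a^{q-1}\xi$ one finds $c=\eta\,\xi/(1+\xi)^2$, and the map $\psi(\xi)=\xi/(1+\xi)^2$ is two-to-one from $\mu_{q+1}\setminus\{1\}$ onto the affine hyperplane $\{m\in\F_q:\Tr_{\F_q/\F_2}(m)=1\}$. Thus $R_2$ corresponds to the $m$ in this hyperplane with $\Tr_{\F_q/\F_2}(\eta m)=0$; since $\eta\ne1$ the functionals $m\mapsto\Tr_{\F_q/\F_2}(m)$ and $m\mapsto\Tr_{\F_q/\F_2}(\eta m)$ are $\F_2$-independent, so exactly $q/4$ of the $q/2$ points satisfy both, giving $|R_2|=(q-1)\cdot 2\cdot(q/4)=q(q-1)/2$ and hence $|R_1|=(q+2)(q-1)/2$, which is part (3).

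I expect the two explicit algebraic reductions, rather than the final counting, to be the main obstacle. In the odd case the delicate point is the Gram-determinant computation together with the square-class bookkeeping: the factors $\lambda^2$, $\epsilon$, $4$ and $b^{q+1}$ must all be tracked to land on $\kappa(a,b)$ with the correct parity, and one must separately confirm the degenerate fibre $\beta=0$. In the even case the crux is proving that $\psi$ is exactly two-to-one with image the full trace-$1$ hyperplane: this is where the norm-$1$ constraint on $\xi$ is converted, through an Artin–Schreier analysis of $m\xi^2+\xi+m=0$, into the trace condition, after which independence of the two functionals (guaranteed by $\eta\ne1$) closes the argument.
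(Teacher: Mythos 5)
Your proposal is correct, and it reaches the stated counts by a genuinely different route from the paper. The paper fixes a norm-one element $s$ (a root of \eqref{eqn_quad} outside $S$) and solves for $r$, obtaining $r^{q-1}=\tfrac{bs^2+a^qs}{as+b^q}$; it then computes $|R_1|=(q-1)|Y|$ by counting the image $Y$ of this rational map on the norm-one circle, determining the number $n_1$ of fibres of size one via the repeated-root condition for \eqref{eqn_xc} (using Lemma \ref{lem_disc} for $q$ odd and the vanishing of the linear coefficient for $q$ even). You instead fix $r$ and read off the norm of the roots directly from the discriminant $\Delta(r)=(a^qr-ar^q)^2+4(br)^{q+1}$: for $q$ odd you normalize \eqref{eqn_quad} by a $\lambda$ with $\lambda^{q-1}=-1$ so that Lemma \ref{lem_disc} applies, and then count $r$ by the square class of the nondegenerate binary quadratic form $\Delta$ (your Gram determinant $\epsilon\,b^{q+1}(a^{q+1}-b^{q+1})$ and the isotropy criterion in terms of $\kappa(a,b)$ both check out, as does the degenerate fibre $r\in a\F_q^*$, where $\Delta=4(br)^{q+1}$ still obeys the same rule); for $q$ even you replace this by an Artin--Schreier trace condition, descend along $r\mapsto r^{q-1}$ to $\mu_{q+1}$, and use that $\psi(\xi)=\xi/(1+\xi)^2$ is two-to-one onto the trace-one hyperplane together with the $\F_2$-independence of $m\mapsto\Tr(m)$ and $m\mapsto\Tr(\eta m)$ (valid since $\eta=b^{q+1}/a^{q+1}\ne1$). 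I verified each of these assertions. What your approach buys is a more conceptual odd-characteristic argument — the counts $\frac{(q-1)(q+3)}{2}$ and $\frac{q^2-1}{2}$ fall out of standard representation numbers of hyperbolic versus anisotropic binary forms rather than a bespoke multiplicity analysis — at the cost of two nontrivial algebraic reductions (the Gram computation with its square-class bookkeeping, and the two-to-one property of $\psi$) that you correctly identify as the places requiring the most care in a full write-up.
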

\begin{proof}

Suppose that $s$ is an element of $\F_{q^2}^*$ such that $s^{q+1}=1$ and it is a root of \eqref{eqn_quad} for some $r\in\F_{q^2}^*$. Then we have $bs^2+a^qs=(as+b^q)r^{q-1}$. Since $a^{q+1}\ne b^{q+1}$ and $s^{q+1}=1$, we deduce that $as+b^q\ne 0$. Similarly, $a+b^qs^q\ne 0$. Hence, $r^{q-1}$ equals
\begin{align}\label{eqn_rqm1}
\frac{bs^2+a^qs}{as+b^q}=(a+b^qs^q)^{q-1}.
\end{align}
Therefore, $|R_1|=(q-1)|Y|$, where $Y:=\{\frac{bs^2+a^qs}{as+b^q}:\,s^{q+1}=1\}$, and $|R_2|=q^2-1-|R_1|$. We note that $Y$ consists of $(q-1)$-st powers by  \eqref{eqn_rqm1}, i.e., $c^{q+1}=1$ for $c\in Y$.

We write $Y'$ for the multiset of size $q+1$ corresponding to $Y$. Each element $c\in Y'$ has multiplicity $1$ or $2$, since
\begin{equation}\label{eqn_xc}
bX^2+a^qX=c(aX+b^q)
\end{equation}
has at most two solutions. Let $n_i$ be the number of elements with multiplicity $i$ in $Y'$ for $i=1,2$. Then $n_1+2n_2=q+1$, $|Y|=n_1+n_2$. It remains to determine $n_1$.

We claim that $c$ has multiplicity $1$ in $Y'$ if and only if \eqref{eqn_xc} has a repeated root. Suppose that \eqref{eqn_xc} has two distinct roots $s,s'$ for some $c\in Y$ and $s$ such that $s^{q+1}=1$. Then $ss'=-cb^{q-1}$ by Vi$\grave{e}$te Theorem. By raising to the $(q+1)$-st power, we deduce that $s'^{q+1}=1$, and so $c$ has multiplicity $2$ in $Y'$. Conversely, suppose that \eqref{eqn_xc} has a repeated root $s$. Since each element of $Y'$ is a $(q-1)-$st power, we write $c=z^{q-1}$ for some $z\in\F_{q^2}^*$. Take $\delta\in\F_{q^2}$ such that $\delta^q+\delta=0$. Multiplying both sides of \eqref{eqn_xc} by $\delta z$, we obtain $(bz\delta)X^2+\tr_{\F_{q^2}/\F_q}(a^qz\delta)X+(bz\delta)^q=0$. If $q$ is odd, then $s^{q+1}=1$ by Lemma \ref{lem_disc}. If $q$ is even, the coefficient  $a^q-ac$ of $X$ in \eqref{eqn_xc} equals $0$ since the equation has a repeated root. Thus \eqref{eqn_xc} has the form $X^2=cb^{q-1}=(zb)^{q-1}$, so the repeated root $s$ satisfies $s^{q+1}=1$. This proves the claim.

To summarize, the elements that have multiplicity $1$ in $Y'$ are those $c\in\F_{q^2}^*$ such that $c$ is a $(q-1)$-st power and \eqref{eqn_xc} has a repeated root.

Suppose that $q$ is odd. The equation \eqref{eqn_xc} has a repeated root if and only if its determinant $\Delta_c:=(a^q-ca)^2+4cb^{1+q}$ is $0$, i.e., $c$ is a root of
\begin{equation}
a^2X^2+(-2a^{q+1}+4b^{1+q})X+a^{2q}=0.
\end{equation}
Similar to the proof of Lemma \ref{lem_quad_sol}, it has two solutions in $\F_{q^2}^*$. By Lemma \eqref{lem_disc}, its solutions are $(q-1)$-st power if and only if its discriminant $16b^{2(q+1)}\kappa(a,b)$ is $0$ or a nonsquare of $\F_q^*$. Since $a^{q+1}\ne b^{q+1}$, we have $\kappa(a,b)\ne 0$. Therefore, $n_1=0$ or $2$ according as $\kappa(a,b)$ is a square or nonsquare of $\F_{q}^*$. The claims in (1) and (2) now follows.

Suppose that $q$ is even. The equation \eqref{eqn_xc} has a repeated root if and only if its coefficient of $X$ is $0$, i.e., $a^q=ac$. It follows that $c=a^{q-1}$, and so $n_1=1$. The claim in (3) then follows. This completes the proof.
\end{proof}

Let $N_{a,b}$ be the number of triples $(s,t,r)\in S\times T_1\times\F_{q^2}^*$ such that
\eqref{eqn_coeff_ab} holds. Then $N_{b,a}$ is the number of triples such that
\eqref{eqn_coeff_ab2} holds. We are now ready to compute $N_{a,b}+N_{b,a}$. Let $\square$ (resp. $\blacksquare$) be the set of nonzero squares and nonsquares of $\F_q$. For a property $P$, we define $[[P]]:=1$ or $0$ according as $P$ holds or not.

\begin{lemma}\label{lem_Nab}
Take notation as above. We have
\begin{equation*}
N_{a,b}=\begin{cases}(q-1)\cdot\left(\frac{q-1}{2}+[[\kappa(a,b)\in\square]]-[[\kappa(b,a)\in\square]]\right),\;&\textup{ if $q$ is odd};\\\frac{q}{2}(q-1), &\textup{ if $q$ is even}.\end{cases}
\end{equation*}
\end{lemma}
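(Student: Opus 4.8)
The plan is to reduce the count $N_{a,b}$ to a counting problem over $r\in R_2$ alone, exploiting the correspondence between valid triples and roots of the quadratic \eqref{eqn_quad}. First I would record that the passage from \eqref{eqn_coeff_ab} to \eqref{eqn_ttq} and then to \eqref{eqn_quad} is reversible: for a fixed $r\in\F_{q^2}^*$ and any root $s\in S$ of \eqref{eqn_quad}, if one \emph{defines} $t:=ar^{-1}-br^{-q}s^{-q}$ as in \eqref{eqn_ttq}, then the identity $t^q=ar^{-1}-br^{-q}s$ holds automatically, because raising the expression for $t$ to the $q$-th power and clearing denominators returns exactly \eqref{eqn_quad}. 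Hence the triple $(s,t,r)$ solves \eqref{eqn_coeff_ab}. Ignoring momentarily the constraint $t\in T_1$, the solutions with a given $r$ are thus in bijection with the roots of \eqref{eqn_quad} lying in $S$: there are none when $r\in R_1$ and exactly two, namely $s$ and $s^{-q}$, when $r\in R_2$ (Lemma \ref{lem_quad_sol} and Lemma \ref{lem_quad_viete}(1)).

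Next I would impose $t\in T_1$. For $r\in R_2$ the two roots $s,s^{-q}$ produce the Galois-conjugate values $t_1$ and $t_2=t_1^q$ (Lemma \ref{lem_quad_viete}(2)). Since $t^{2(q-1)}=1$ is equivalent to $(t^q)^{2(q-1)}=1$, we have $t\in T\iff t^q\in T$, so either both $t_1,t_2$ lie in $T$ or neither does; and when both do, Lemma \ref{lem_quad_viete}(2) guarantees that exactly one lies in $T_1$. Consequently each $r\in R_2$ contributes exactly $1$ to $N_{a,b}$ when its associated $t$ lies in $T$ and $0$ otherwise, which gives $N_{a,b}=|\{r\in R_2:\ t\in T\}|$.

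Then I would apply Lemma \ref{lem_ts}. If $q$ is even, $t\in T$ for every such $r$, so $N_{a,b}=|R_2|=\tfrac{q(q-1)}{2}$ by Lemma \ref{lem_R1R2}(3), which is the claimed value. If $q$ is odd, Lemma \ref{lem_ts} gives $t\in T\iff r^{q-1}+a^{q-1}\ne 0$, whence $N_{a,b}=|R_2|-|\{r\in R_2:\ r^{q-1}=-a^{q-1}\}|$. For the subtracted set I would compute the fiber: as $q$ is odd, $(-a^{q-1})^{q+1}=(-1)^{q+1}a^{q^2-1}=1$, so $-a^{q-1}$ is a $(q-1)$-st power and there are exactly $q-1$ elements $r\in\F_{q^2}^*$ with $r^{q-1}=-a^{q-1}$. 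By Lemma \ref{lnm_cor_1}, all of these lie in $R_2$ when $\kappa(b,a)\in\square$ and none of them do otherwise, so the subtracted term equals $(q-1)[[\kappa(b,a)\in\square]]$. Finally, writing $|R_2|=(q-1)\bigl(\tfrac{q-1}{2}+[[\kappa(a,b)\in\square]]\bigr)$, which reconciles cases (1) and (2) of Lemma \ref{lem_R1R2}, yields the stated formula.

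The main obstacle is the bookkeeping in the first two paragraphs: verifying that the correspondence between solution triples and roots of \eqref{eqn_quad} is a genuine bijection (reversibility of the derivation), and confirming that the $T_1$-selection collapses the two roots attached to each $r\in R_2$ into a net contribution of exactly one rather than two. Everything afterward is substitution from the size formulas for $R_2$ together with the fiber count, which is routine once the differing sign conventions for $\kappa$ in Lemmas \ref{lnm_cor_1} and \ref{lem_R1R2} are tracked consistently.
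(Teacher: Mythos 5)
Your proposal is correct and follows essentially the same route as the paper: reduce $N_{a,b}$ to counting $r\in R_2$ with the associated $t$ in $T$, use Lemma \ref{lem_quad_viete}(2) to see each such $r$ contributes exactly one triple, then apply Lemmas \ref{lem_ts}, \ref{lnm_cor_1} and \ref{lem_R1R2} to evaluate the count in the odd and even cases. Your explicit verification of the reversibility of the passage from \eqref{eqn_coeff_ab} to \eqref{eqn_quad} and of the fiber size $q-1$ for $r^{q-1}=-a^{q-1}$ matches (and slightly elaborates on) the paper's argument.
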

\begin{proof}
We first consider the case $q$ is odd. Take a triple $(s,t,r)\in S\times T_1\times\F_{q^2}^*$ such that \eqref{eqn_coeff_ab} holds. By the analysis preceding Lemma \ref{lem_quad_sol}, the value of $t$ is uniquely determined by $r$ and $s$ by \eqref{eqn_ttq}, and \eqref{eqn_quad} should have two solutions in $S$, i.e., $r\in R_2$. By Lemma \ref{lem_ts}, we have $r^{q-1}+a^{q-1}\ne 0$. By Lemma \ref{lnm_cor_1}, an element $r$ such that $r^{q-1}+a^{q-1}\ne 0$ is in $R_2$ if and only if $\kappa(b,a)\in\square$.

We now reverse the above arguments. Take $r\in R_2$ such that $r^{q-1}+a^{q-1}\ne 0$. The two solutions $s_1,s_2$ of \eqref{eqn_quad} are both in $S$, and $t_i=a r^{-1}-br^{-q}s_i^{-q}$, $i=1,2$, are in $T$ by Lemma \ref{lem_ts}. By (2) of Lemma \ref{lem_quad_viete}, exactly one of $t_i$'s is in $T_1$, say, $t_1$. Then $(s_1,t_1,r)$ is a solution to \eqref{eqn_coeff_ab}. Therefore, $N_{a,b}=|R_2|-(q-1)[[\kappa(b,a)\in\square]]$, where the latter term corresponds to the $q-1$ $r$'s such that $r^{q-1}+a^{q-1}=0$. By Lemma \ref{lem_R1R2}, we have $|R_2|=(q-1)\cdot\left(\frac{q-1}{2}+[[\kappa(a,b)\in\square]]\right)$. To sum up, we have
\[
N_{a,b}=(q-1)\cdot\left(\frac{q-1}{2}+[[\kappa(a,b)\in\square]]\right)-(q-1)[[\kappa(b,a)\in\square]].
\]
This completes the proof of the case $q$ is odd. By the same argument, we get $N_{a,b}=|R_2|$ in the case $q$ is even. This completes the proof.
\end{proof}

\begin{corollary}\label{cor_mult_fabn0}
For $f_{a,b}\in\GL(V)$ with $ab\ne 0$, its multiplicity in $\tilde{A}\cdot\tilde{D}$ equals $q(q-1)$ or $(q-1)^2$ according as $q$ is even or not.
\end{corollary}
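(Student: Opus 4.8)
The plan is to identify the multiplicity of $f_{a,b}$ in $\tilde{A}\cdot\tilde{D}$ with the symmetric sum $N_{a,b}+N_{b,a}$, and then to read off both summands from Lemma \ref{lem_Nab}. By Lemma \ref{lem_str_ab0}, any expression $f_{a,b}=L_{t,s}\cdot g$ with $g\in\tilde{D}$ and $t\in T_1$ must have $s\in S$; and, as recorded in the discussion following the proof of Lemma \ref{lem_mult_fab0}, the two families of elements of $\tilde{D}$ — the $f_{r,0}$'s and the $f_{0,r}$'s — yield respectively equations \eqref{eqn_coeff_ab} and \eqref{eqn_coeff_ab2}. Hence the sought multiplicity is precisely the number of triples $(s,t,r)\in S\times T_1\times\F_{q^2}^*$ satisfying \eqref{eqn_coeff_ab} or \eqref{eqn_coeff_ab2}, which is $N_{a,b}+N_{b,a}$.

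For even $q$, Lemma \ref{lem_Nab} gives $N_{a,b}=N_{b,a}=\frac{q}{2}(q-1)$, so the multiplicity equals $q(q-1)$. For odd $q$, I would substitute both values from Lemma \ref{lem_Nab}, using that $N_{b,a}$ arises from $N_{a,b}$ by interchanging $a$ and $b$, which swaps $\kappa(a,b)$ and $\kappa(b,a)$. This yields
\begin{align*}
N_{a,b}+N_{b,a}=&(q-1)\left(\frac{q-1}{2}+[[\kappa(a,b)\in\square]]-[[\kappa(b,a)\in\square]]\right)\\
&+(q-1)\left(\frac{q-1}{2}+[[\kappa(b,a)\in\square]]-[[\kappa(a,b)\in\square]]\right),
\end{align*}
and the two Iverson-bracket corrections cancel, leaving $(q-1)^2$.

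There is no serious obstacle at this stage: the corollary is a formal consequence of Lemma \ref{lem_Nab} once the multiplicity is written as $N_{a,b}+N_{b,a}$. The one point worth flagging is that, for odd $q$, neither $N_{a,b}$ nor $N_{b,a}$ is individually symmetric in $a,b$ — each carries a square-membership correction depending on whether $\kappa(a,b)$ or $\kappa(b,a)$ lies in $\square$. The content of the computation is exactly that these corrections are equal and opposite, so they vanish in the sum; this is what forces the multiplicity to be the uniform value $(q-1)^2$, independent of the particular $f_{a,b}$ with $ab\ne 0$.
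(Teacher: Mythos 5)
Your proposal is correct and follows essentially the same route as the paper: the paper likewise identifies the multiplicity with the count of triples $(s,t,r)\in S\times T_1\times\F_{q^2}^*$ satisfying \eqref{eqn_coeff_ab} or \eqref{eqn_coeff_ab2}, equates this with $N_{a,b}+N_{b,a}$, and invokes Lemma \ref{lem_Nab}. Your explicit verification that the two Iverson-bracket terms cancel in the odd case is exactly the (unwritten) computation the paper relies on.
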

\begin{proof}
By our analysis following Lemma \ref{lem_mult_fab0}, this number equals the number of triples $(s,t,r)\in S\times T_1\times\F_{q^2}^*$ such that \eqref{eqn_coeff_ab} or \eqref{eqn_coeff_ab2} holds. It equals $N_{a,b}+N_{b,a}$, so the claim follows from Lemma \ref{lem_Nab}.
\end{proof}

By combining Lemma \ref{lem_mult_fab0} and Corollary \ref{cor_mult_fabn0}, we see that $\tilde{A}\cdot\tilde{D}=\lambda(q-1)\cdot\GL(V)$, where $\lambda=q$ or $q-1$ according as $q$ is even or odd. By taking quotient by the center, we conclude that Theorem \ref{conj_main} holds.

\vspace*{10pt}

\noindent\textbf{Acknowledgement.} This work was supported by National Natural Science Foundation of China under Grant No. 11771392.\\
\begin{center}
	\scriptsize
	\setlength{\bibsep}{0.5ex}  
		\linespread{0.5}
	\bibliographystyle{plain}
	
	\footnotesize

\end{center}

\end{document}